\documentclass[a4paper,11pt]{article}

\usepackage{amsmath}
\usepackage{amssymb}
\usepackage{amsthm}
\usepackage{amsfonts}
\usepackage{mathtools}
\usepackage{ascmac}
\usepackage{spalign}
\usepackage{fancybox}
\usepackage[mathscr]{eucal}
\usepackage{footnpag}
\usepackage{siunitx}
\usepackage{multicol}
\usepackage{bbm}
\usepackage{framed}
\usepackage{here}

\usepackage{graphicx}
\usepackage{xcolor}
\usepackage{wrapfig}
\usepackage{float}

\theoremstyle{definition}
\newtheorem{theorem}{Theorem}[section]
\newtheorem*{theorem*}{Theorem}
\newtheorem{definition}{Definition}[section]
\newtheorem*{definition*}{Definition}
\newtheorem{proposition}{Proposition}[section]
\newtheorem*{proposition*}{Proposition}
\newtheorem{lemma}{Lemma}[section]
\newtheorem*{lemma*}{Lemma}
\newtheorem{corollary}{Corollary}[section]
\newtheorem*{corollary*}{Corollary}
\newtheorem{remark}{Remark}[section]
\newtheorem*{remark*}{Remark}

\title{Chebyshev Recurrence Structures for Reduced Spectral Functions of Cyclic Circulant Graphs}

\author{
Shunya Tamura\thanks{
Okegawa City Okegawa West Junior High School,
Saitama, 363-0027, Japan,
e-mail: \texttt{shunya.tamura059@gmail.com}
}
}

\date{}

\begin{document}
\maketitle

\begin{abstract}
Let \(S\) be a nonempty finite set of positive integers, let
\(q=\max S\), and let \(B_n(S)\), \(n>2q\), be the normalized product
sequence associated with the Chebyshev-type polynomial of the cyclic
circulant graph \(G_n(S)\).
When \(G_n(S)\) is connected, \(B_n(S)\) is both the normalized
spanning-tree number and a normalized special value of the
\emph{reduced spectral function}, a determinant-type function constructed
from the non-trivial adjacency spectrum.
Chebyshev root representations and the existence of linear recurrences for
fixed-step circulant spanning-tree sequences are known.
Starting from these representations, we explicitly construct a monic
annihilating polynomial
\(\mathcal H_S(X)\in\mathbb Z[X]\) of degree \(3^{q-1}\), which yields a
general upper bound for the recurrence order of \(B_n(S)\).
By collecting coincident exponential bases and accounting for possible
cancellations, we determine the minimal annihilating polynomial and give a
sufficient condition under which the minimal recurrence order is exactly
\(3^{q-1}\).
For \(S=\{1,2,3\}\) and \(S=\{1,3\}\), we explicitly derive the
corresponding ninth-degree annihilating polynomials and prove their minimality.
\end{abstract}

\noindent
{\bf Keywords:}
cyclic circulant graph, 
reduced spectral function, 
spanning tree, 
Chebyshev polynomial, 
annihilating polynomial, 
minimal linear recurrence

\noindent
{\bf 2020 Mathematics Subject Classification:}
05C50, 05C30, 05C25, 11B37.

\section{Introduction}
\label{sec:introduction}

The number of spanning trees is a fundamental invariant in graph theory.
By Kirchhoff's Matrix--Tree Theorem, it is expressed as a product of the
nonzero Laplacian eigenvalues
\cite{Biggs1993,Kirchhoff1847}.
For regular graphs, this product may also be described in terms of the
adjacency spectrum.

Spanning-tree numbers of circulant graphs have been studied extensively.
Kleitman and Golden proved the classical identity
\[
\tau(C_n^2)=nF_n^2,
\]
where \(C_n^2\) denotes the square of the cycle \(C_n\), and \(F_n\) is
the \(n\)-th Fibonacci number
\cite{KleitmanGolden1975}.
Boesch and Prodinger developed formulas involving Chebyshev polynomials
\cite{BoeschProdinger1986}.
Zhang, Yong, and Golin established recurrence relations and Chebyshev
formulas for circulant and related graphs
\cite{ZhangYongGolin2000,ZhangYongGolin2005},
and further analyses were given in
\cite{AtajanYongInaba2006}.
Mednykh and Mednykh studied arithmetic and asymptotic properties of
circulant spanning-tree numbers and the rationality of their generating
functions
\cite{MednykhMednykh2019,MednykhMednykh2020}.

For connected circulant graphs with fixed jumps, earlier recurrence
formulations write the spanning-tree number in the form
\[
\tau(G_n(S))=n\,a_n(S)^2,
\]
where \(a_n(S)\) satisfies a linear recurrence
\cite{ZhangYongGolin2000,AtajanYongInaba2006}.
Thus, the normalization considered below satisfies
\[
B_n(S)=a_n(S)^2
\]
whenever \(G_n(S)\) is connected.
The root parameters used here are related to the standard Chebyshev
parameters by
\[
w_r=\frac{\phi_r}{2},
\]
while \(\rho_r\) is a root of
\[
z^2-2w_rz+1=0
\]
\cite{ZhangYongGolin2005,MednykhMednykh2019}.
Starting from these known representations, we explicitly construct a
monic annihilating polynomial with integer coefficients for \(B_n(S)\),
obtain a general bound for its recurrence order, and determine its minimal
annihilating polynomial after coincident exponential bases and
cancellations are taken into account.

For a connected \(d\)-regular graph \(G\) on \(n\) vertices, where
\(d\geq1\), let
\[
d=\mu_0,\mu_1,\ldots,\mu_{n-1}
\]
be its adjacency eigenvalues.
We define
\[
\widetilde{\zeta}_G(u)
=
\prod_{j=1}^{n-1}(1-u\mu_j)^{-1}.
\]
Equivalently,
\[
\widetilde{\zeta}_G(u)^{-1}
=
\frac{\det(I-uA(G))}{1-du},
\]
where \(A(G)\) denotes the adjacency matrix of \(G\).
We call \(\widetilde{\zeta}_G(u)\) the
\emph{reduced spectral function}.
This terminology is specific to the present paper;
\(\widetilde{\zeta}_G(u)^{-1}\) is the reciprocal form of the reduced
adjacency characteristic polynomial.
Its relation to standard graph and spectral zeta functions is discussed in
Section~\ref{sec:preliminaries}.
At \(u=1/d\),
\[
\widetilde{\zeta}_G\left(\frac{1}{d}\right)^{-1}
=
\frac{n\tau(G)}{d^{\,n-1}}.
\]

Let \(S\) be a nonempty finite set of positive integers and put
\[
q=\max S.
\]
For every integer \(n>2q\), consider the cyclic circulant graph
\[
G_n(S)
=
\operatorname{Cay}
\left(
\mathbb Z_n,\{\pm s:s\in S\}
\right).
\]
Associated with \(S\), define
\[
P_S(x)
=
2|S|
-
2\sum_{s\in S}
T_s\left(\frac{x}{2}\right),
\]
where \(T_s(x)\) is the Chebyshev polynomial of the first kind.
The polynomial \(P_S(x)\) belongs to \(\mathbb Z[x]\), has degree \(q\)
and leading coefficient \(-1\), and has \(x=2\) as a simple root.
Consequently,
\[
Q_S(x)
=
\frac{P_S(x)}{2-x}
\]
is a monic polynomial in \(\mathbb Z[x]\) of degree \(q-1\).

We study the normalized algebraic product
\[
B_n(S)
=
\frac{1}{n^2}
\prod_{j=1}^{n-1}
P_S\left(
2\cos\frac{2\pi j}{n}
\right),
\qquad
n>2q.
\]
This sequence is defined independently of whether \(G_n(S)\) is
connected.
When \(G_n(S)\) is connected, Kirchhoff's Matrix--Tree Theorem gives
\[
B_n(S)
=
\frac{\tau(G_n(S))}{n}
=
\frac{(2|S|)^{n-1}}{n^2}
\widetilde{\zeta}_{G_n(S)}
\left(
\frac{1}{2|S|}
\right)^{-1}.
\]

Put
\[
m=q-1,
\]
and let
\[
\phi_1,\ldots,\phi_m
\]
be the roots of \(Q_S(x)\), counted with multiplicities.
For each \(r=1,\ldots,m\), choose
\(\rho_r\in\mathbb C\setminus\{0\}\) satisfying
\[
\rho_r+\rho_r^{-1}=\phi_r.
\]
Then the Chebyshev root representation of \(B_n(S)\) takes the form
\[
B_n(S)
=
\prod_{r=1}^{m}
\left\{
(-1)^n
\frac{
\rho_r^n+\rho_r^{-n}-2
}{
2-\phi_r
}
\right\}.
\]
When \(m=0\), the empty product is understood to be \(1\).

For a sequence \(b=\{b_n\}\), let \(E\) denote the shift operator defined
by
\[
(Eb)_n=b_{n+1}.
\]
A polynomial \(F(X)\) is said to annihilate the tail sequence
\(\{b_n\}_{n>N}\) if
\[
F(E)b_n=0
\]
for every \(n>N\).
The root representation above leads to the following result.

\begin{theorem}[Main theorem]
\label{thm:intro-main}
Let \(S\) be a nonempty finite set of positive integers, let
\(q=\max S\), and put \(m=q-1\).
For
\[
\boldsymbol{\varepsilon}
=
(\varepsilon_1,\ldots,\varepsilon_m)
\in
\mathcal E_m
:=
\{-1,0,1\}^m,
\]
define
\[
\kappa(\boldsymbol{\varepsilon})
=
\left|
\left\{
r:\varepsilon_r=0
\right\}
\right|
\]
and
\[
\alpha_{\boldsymbol{\varepsilon}}
=
(-1)^m
\prod_{r=1}^{m}
\rho_r^{\varepsilon_r}.
\]
When \(m=0\), set
\[
\mathcal E_0=\{\varnothing\},
\qquad
\kappa(\varnothing)=0,
\qquad
\alpha_{\varnothing}=1.
\]
Then
\[
B_n(S)
=
\frac{1}{Q_S(2)}
\sum_{\boldsymbol{\varepsilon}\in\mathcal E_m}
(-2)^{\kappa(\boldsymbol{\varepsilon})}
\alpha_{\boldsymbol{\varepsilon}}^{\,n}
\qquad
(n>2q).
\]

Define
\[
\mathcal H_S(X)
=
\prod_{\boldsymbol{\varepsilon}\in\mathcal E_m}
\left(
X-\alpha_{\boldsymbol{\varepsilon}}
\right).
\]
Then
\[
\mathcal H_S(X)\in\mathbb Z[X],
\qquad
\deg\mathcal H_S=3^{q-1},
\]
and
\[
\mathcal H_S(E)B_n(S)=0
\qquad
(n>2q).
\]
Consequently, \(B_n(S)\) satisfies an integer-coefficient linear
recurrence of order at most \(3^{q-1}\).

Let
\[
\mathcal A_S
=
\left\{
\alpha_{\boldsymbol{\varepsilon}}:
\boldsymbol{\varepsilon}\in\mathcal E_m
\right\}
\]
be the set of distinct exponential bases, and, for each
\(\gamma\in\mathcal A_S\), define
\[
C_\gamma
=
\sum_{\substack{
\boldsymbol{\varepsilon}\in\mathcal E_m\\
\alpha_{\boldsymbol{\varepsilon}}=\gamma
}}
(-2)^{\kappa(\boldsymbol{\varepsilon})}.
\]
Put
\[
\mathcal A_S^{*}
=
\left\{
\gamma\in\mathcal A_S:
C_\gamma\neq0
\right\}.
\]
Then the minimal monic polynomial \(F(X)\in\mathbb C[X]\) satisfying
\[
F(E)B_n(S)=0
\qquad
(n>2q)
\]
is
\[
\mathcal M_S(X)
=
\prod_{\gamma\in\mathcal A_S^{*}}
(X-\gamma).
\]
Moreover,
\[
\mathcal M_S(X)\in\mathbb Z[X].
\]
In particular, if the \(3^{q-1}\) indexed values
\(\alpha_{\boldsymbol{\varepsilon}}\) are pairwise distinct, then
\[
\mathcal M_S(X)=\mathcal H_S(X),
\]
and the minimal recurrence order is exactly \(3^{q-1}\).
\end{theorem}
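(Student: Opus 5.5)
The proof splits into three parts: an exponential-sum formula for \(B_n(S)\), integrality of \(\mathcal H_S\), and minimality together with integrality of \(\mathcal M_S\).

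\textbf{Exponential-sum formula.} Start from the Chebyshev root representation of \(B_n(S)\) stated before the theorem. For each factor,
\[
(-1)^n(\rho_r^n+\rho_r^{-n}-2)=\sum_{\varepsilon_r\in\{-1,0,1\}}(-2)^{[\varepsilon_r=0]}\bigl(-\rho_r^{\varepsilon_r}\bigr)^n .
\]
Expanding the product over \(r\) distributes these sums, so the coefficient of \(\bigl(\prod_r(-\rho_r^{\varepsilon_r})\bigr)^n=\alpha_{\boldsymbol\varepsilon}^n\) is \((-2)^{\kappa(\boldsymbol\varepsilon)}\). The common denominator is \(\prod_r(2-\phi_r)=Q_S(2)\), because \(Q_S\) is monic with roots \(\phi_r\). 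The case \(m=0\) is trivial. Note also that \(Q_S(2)=-P_S'(2)=\sum_{s\in S}s^2\neq0\), using \(T_s'(1)=s^2\).

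\textbf{Integrality and annihilation by \(\mathcal H_S\).} Write \(\mathcal H_S(X)=\prod_{\boldsymbol\varepsilon}(X-(-1)^m\prod_r\rho_r^{\varepsilon_r})\). This expression is invariant under each involution \(\rho_r\mapsto\rho_r^{-1}\), since that swaps \(\varepsilon_r=\pm1\). It is also invariant under permutations of the indices \(r\), which permute \(\mathcal E_m\). Hence each coefficient is a polynomial with integer coefficients in the \(\rho_r^{\pm1}\) that is invariant under the hyperoctahedral group. By the fundamental theorem of symmetric functions applied twice, it is therefore an integer polynomial in the \(\phi_r=\rho_r+\rho_r^{-1}\), and symmetric in them.

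The first application: a Laurent polynomial in one variable \(\rho\) that is invariant under \(\rho\mapsto\rho^{-1}\) is an integer polynomial in \(\rho+\rho^{-1}\). This is proved by induction on degree, using that \(\rho^k+\rho^{-k}\) is a monic integer polynomial in \(\rho+\rho^{-1}\). Doing this variable by variable gives an integer polynomial in \(\phi_1,\dots,\phi_m\) that is symmetric in them. The second application then writes it in elementary symmetric functions of the \(\phi_r\), which are \(\pm\) the coefficients of the monic integer polynomial \(Q_S\), so they lie in \(\mathbb Z\).

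To make this rigorous, I would work over the generic ring \(\mathbb Z[\rho_1^{\pm1},\dots,\rho_m^{\pm1}]\) and specialize afterwards. The degree \(3^m\) is immediate. Annihilation follows because \((E-\alpha)\alpha^n=0\) and every exponential in the formula has its base among the roots of \(\mathcal H_S\).

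\textbf{Minimality and integrality of \(\mathcal M_S\).} Collecting coincident terms gives \(B_n=Q_S(2)^{-1}\sum_{\gamma\in\mathcal A_S^*}C_\gamma\gamma^n\), with the \(\gamma\) distinct and the \(C_\gamma\) nonzero. Every \(\gamma\) is nonzero because \(\rho_r\neq0\), so \(\mathcal M_S\) annihilates the sequence.

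Conversely, let \(F\) annihilate the tail. Then \(\sum_\gamma C_\gamma F(\gamma)\gamma^n=0\) for all \(n>2q\). Taking \(|\mathcal A_S^*|\) consecutive values of \(n\) and using invertibility of the Vandermonde matrix on distinct nonzero \(\gamma\) gives \(F(\gamma)=0\) for every \(\gamma\in\mathcal A_S^*\). Hence \(\mathcal M_S\mid F\), which proves minimality.

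For \(\mathcal M_S\in\mathbb Z[X]\), use a Galois argument.
\begin{itemize}
\item Any \(\sigma\in\mathrm{Gal}(\overline{\mathbb Q}/\mathbb Q)\) permutes the multiset \(\{\phi_r\}\), since these are the roots of \(Q_S\).
\item It maps each \(\rho_r\) to a root of \(z^2-\sigma(\phi_r)z+1\), that is, to \(\rho_{\pi(r)}^{\pm1}\) for some permutation \(\pi\).
\item It therefore permutes the index set \(\mathcal E_m\) by a signed permutation \(\boldsymbol\varepsilon\mapsto\boldsymbol\varepsilon'\) that preserves \(\kappa\), and satisfies \(\sigma(\alpha_{\boldsymbol\varepsilon})=\alpha_{\boldsymbol\varepsilon'}\).
\end{itemize}
So \(\sigma\) permutes \(\mathcal A_S\) and sends \(C_\gamma\) to \(C_{\sigma\gamma}\). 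The \(C_\gamma\) are rational integers, so this equals \(C_\gamma\), and hence \(\sigma\) preserves \(\mathcal A_S^*\). Consequently \(\mathcal M_S\in\mathbb Q[X]\).

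Since \(\mathcal M_S\) divides \(\mathcal H_S\), which is monic in \(\mathbb Z[X]\), Gauss's lemma gives \(\mathcal M_S\in\mathbb Z[X]\). Its roots are algebraic integers, being roots of the monic integer polynomial \(\mathcal H_S\).

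\textbf{Pairwise distinct case.} If the \(3^m\) values \(\alpha_{\boldsymbol\varepsilon}\) are pairwise distinct, each \(C_\gamma\) is a single term \((-2)^\kappa\neq0\). Then \(\mathcal A_S^*=\mathcal A_S\) has size \(3^m\) and \(\mathcal M_S=\mathcal H_S\).

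\textbf{Main obstacle.} The delicate step is this Galois bookkeeping. One must check that \(\sigma\) acts compatibly on the chosen square-root-type lifts \(\rho_r\), including when the \(\phi_r\) are repeated or equal to \(\pm2\), so that \(\rho_r=\rho_r^{-1}\). I would handle this by observing that the multiset \(\{\rho_r,\rho_r^{-1}\}_r\) is Galois-stable, and that \(\alpha_{\boldsymbol\varepsilon}\) depends only on a choice of one element from each pair. Then I would define the induced map on \(\mathcal E_m\) pair by pair.
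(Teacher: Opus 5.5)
Your proposal is correct. For the exponential-sum formula, the Vandermonde minimality argument, and the integrality of \(\mathcal M_S\), it follows the paper essentially verbatim. Finishing with Gauss's lemma via \(\mathcal M_S\mid\mathcal H_S\), rather than the paper's observation that rational algebraic integers are integers, is an equivalent last step.

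The genuine difference is in proving \(\mathcal H_S(X)\in\mathbb Z[X]\). The paper uses the same Galois mechanism there as for \(\mathcal M_S\): each \(\sigma\in\operatorname{Gal}(\overline{\mathbb Q}/\mathbb Q)\) induces a signed permutation of \(\mathcal E_m\), so the coefficients are rational, and they are algebraic integers because the \(\rho_r\) are. You instead form the universal product over \(\mathbb Z[\rho_1^{\pm1},\dots,\rho_m^{\pm1}]\). You then use hyperoctahedral invariance to express its coefficients as integer polynomials in the elementary symmetric functions of the \(\phi_r=\rho_r+\rho_r^{-1}\), and specialize.

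Your route avoids Galois theory and algebraic-integer facts entirely. It handles repeated roots and \(\phi_r=-2\) for free, since a universal identity simply specializes. It also gives slightly more: the coefficients of \(\mathcal H_S\) are fixed integer polynomials in the coefficients of \(Q_S\). This is what the hand computations of \(A(X)\) and \(D(X)\) for \(S=\{1,2,3\}\) and \(S=\{1,3\}\) carry out. Its limitation is that it cannot detect \(\mathcal A_S^{*}\), which depends on coincidences and cancellations among the specific values \(\rho_r\). So you must revert to Galois for \(\mathcal M_S\) anyway, whereas the paper's single Galois argument covers both integrality claims at once.

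The ``main obstacle'' you flag is not a real difficulty. Once \(\pi_\sigma\) matches the multiset of roots of \(Q_S\), \(\sigma(\rho_r)\) is a root of \(z^2-\phi_{\pi_\sigma(r)}z+1\), whose roots are exactly \(\rho_{\pi_\sigma(r)}^{\pm1}\). Hence a sign \(\delta_r\) with \(\sigma(\rho_r)=\rho_{\pi_\sigma(r)}^{\delta_r}\) always exists; either sign works when \(\rho_{\pi_\sigma(r)}=\rho_{\pi_\sigma(r)}^{-1}\). Then \(\varepsilon'_{\pi_\sigma(r)}=\delta_r\varepsilon_r\) is the \(\kappa\)-preserving bijection, exactly as in the paper.
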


The cases \(S=\{1\}\) and \(S=\{1,2\}\) are included as classical
illustrations.
For \(S=\{1,2,3\}\) and \(S=\{1,3\}\), we explicitly derive the
corresponding ninth-degree annihilating polynomials and prove their
minimality.
The latter example lies outside the cycle-power family.

Section~\ref{sec:preliminaries} recalls the required spectral and
Chebyshev identities.
Sections~\ref{sec:circulant} and \ref{sec:special-value} introduce the
circulant graphs and the normalized product sequence.
Section~\ref{sec:root-recurrence} proves the main results.
Sections~\ref{sec:examples} and \ref{sec:verification} present explicit
examples and exact algebraic checks.
Section~\ref{sec:conclusion} concludes the paper.

\section{Preliminaries}
\label{sec:preliminaries}

We recall the spectral and Chebyshev identities used in the subsequent
sections.
For standard facts on spectral graph theory, we refer to
\cite{Biggs1993}.
Unless otherwise stated, all graphs are finite, simple, connected, and
undirected.

Let \(G\) be a \(d\)-regular graph on \(n\) vertices, where \(d\geq1\).
Write
\[
A=A(G),
\qquad
L=dI-A
\]
for its adjacency and Laplacian matrices.
Let
\[
d=\mu_0,\mu_1,\ldots,\mu_{n-1}
\]
be the adjacency eigenvalues.
Since \(G\) is connected, the eigenvalue \(\mu_0=d\) is simple, and the
Laplacian eigenvalues are
\[
0=\lambda_0,\lambda_1,\ldots,\lambda_{n-1},
\qquad
\lambda_j=d-\mu_j.
\]

\subsection{Reduced spectral function}
\label{subsec:reduced-spectral-function}

\begin{definition}
\label{def:reduced-spectral-function}
The \emph{reduced spectral function} of \(G\) is defined by
\[
\widetilde{\zeta}_G(u)
=
\prod_{j=1}^{n-1}
(1-u\mu_j)^{-1}.
\]
\end{definition}

Let
\[
\chi_G(t)
=
\det(tI-A)
\]
be the adjacency characteristic polynomial, and put
\[
\chi_G^{\mathrm{red}}(t)
=
\frac{\chi_G(t)}{t-d}.
\]
Then
\[
\begin{aligned}
\widetilde{\zeta}_G(u)^{-1}
&=
\prod_{j=1}^{n-1}(1-u\mu_j)
\\
&=
u^{n-1}
\chi_G^{\mathrm{red}}(u^{-1})
\\
&=
\frac{\det(I-uA)}{1-du}.
\end{aligned}
\]
Thus, \(\widetilde{\zeta}_G(u)^{-1}\) is the reciprocal form of the
reduced adjacency characteristic polynomial.

\begin{remark}
\label{rem:reduced-function-terminology}
The term \emph{reduced spectral function} is specific to the present
paper.
It should not be confused with the Ihara zeta function
\cite{Terras2010}
or with a spectral zeta function of the form
\[
\sum_{\lambda\neq0}\lambda^{-s}
\]
\cite{Kirsten2001}.
Here it is used as a determinant-type normalization of the non-trivial
adjacency spectrum.

Connectedness ensures that the eigenvalue \(d\) is simple.
For a disconnected \(d\)-regular graph, removing only one factor
\(1-du\) would leave an additional zero at \(u=1/d\).
The algebraic product sequence introduced later is therefore defined
independently of this connectedness assumption.
\end{remark}

\begin{proposition}
\label{prop:reduced-special-value}
Let \(G\) be a connected \(d\)-regular graph on \(n\) vertices, where
\(d\geq1\).
Then
\[
\widetilde{\zeta}_G\left(\frac{1}{d}\right)^{-1}
=
\frac{1}{d^{\,n-1}}
\prod_{j=1}^{n-1}\lambda_j.
\]
\end{proposition}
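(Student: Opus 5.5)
The plan is to substitute \(u=1/d\) directly into the product defining the reduced spectral function (Definition~\ref{def:reduced-spectral-function}) and rewrite each factor through the Laplacian eigenvalues. By definition,
\[
\widetilde{\zeta}_G(u)^{-1}=\prod_{j=1}^{n-1}(1-u\mu_j).
\]
This is a polynomial in \(u\), so evaluation at \(u=1/d\) is legitimate. The only point to check is that the reciprocal makes sense, i.e.\ that none of the factors vanishes there.

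First, for each \(j=1,\ldots,n-1\), the factor at \(u=1/d\) is
\[
1-\frac{\mu_j}{d}=\frac{d-\mu_j}{d}=\frac{\lambda_j}{d},
\]
using \(\lambda_j=d-\mu_j\) from \(L=dI-A\). Multiplying the \(n-1\) factors gives
\[
\widetilde{\zeta}_G(1/d)^{-1}=d^{-(n-1)}\prod_{j=1}^{n-1}\lambda_j,
\]
which is the stated identity.

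Second, I would confirm that the reciprocal is well defined. Since \(G\) is connected, the eigenvalue \(\mu_0=d\) is simple, as recalled in the preliminaries. Hence \(\mu_j\neq d\) for \(j\ge1\), so every \(\lambda_j\neq0\). The product is therefore nonzero, and \(\widetilde{\zeta}_G\) has no pole at \(u=1/d\). This is the only step with any content, and it is exactly where connectedness enters, as Remark~\ref{rem:reduced-function-terminology} anticipates.

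Optionally, I would combine this with Kirchhoff's Matrix--Tree Theorem, \(\prod_{j\ge1}\lambda_j=n\tau(G)\), to recover the form \(n\tau(G)/d^{\,n-1}\) stated in the introduction. This is not needed for the proposition itself, and I expect no genuine obstacle anywhere in the argument.
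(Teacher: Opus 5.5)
Your proposal is correct and matches the paper's proof: substitute \(u=1/d\), rewrite each factor as \(1-\mu_j/d=\lambda_j/d\), and take the product over \(j=1,\ldots,n-1\). Your extra check that connectedness forces \(\lambda_j\neq0\) for \(j\geq1\) is a harmless addition. It shows that \(\widetilde{\zeta}_G\) itself has no pole at \(1/d\), but the identity does not need it, since it concerns the polynomial \(\widetilde{\zeta}_G(u)^{-1}\).
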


\begin{proof}
Since
\[
1-\frac{\mu_j}{d}
=
\frac{d-\mu_j}{d}
=
\frac{\lambda_j}{d},
\]
the result follows by taking the product over
\(j=1,\ldots,n-1\).
\end{proof}

Kirchhoff's Matrix--Tree Theorem gives
\[
\tau(G)
=
\frac{1}{n}
\prod_{j=1}^{n-1}\lambda_j.
\]
Hence Proposition \ref{prop:reduced-special-value} immediately yields the
following consequence.

\begin{corollary}
\label{cor:reduced-spanning-tree}
Let \(G\) be a connected \(d\)-regular graph on \(n\) vertices, where
\(d\geq1\).
Then
\[
\widetilde{\zeta}_G\left(\frac{1}{d}\right)^{-1}
=
\frac{n\tau(G)}{d^{\,n-1}}.
\]
\end{corollary}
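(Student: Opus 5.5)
The corollary follows by combining Proposition~\ref{prop:reduced-special-value} with Kirchhoff's Matrix--Tree Theorem as stated just above. First I invoke Proposition~\ref{prop:reduced-special-value}. Connectedness of \(G\) makes the eigenvalue \(d\) simple, so each \(\lambda_j=d-\mu_j\) is nonzero for \(j\ge1\). The proposition then gives
\[
\widetilde{\zeta}_G\left(\frac{1}{d}\right)^{-1}
=
\frac{1}{d^{\,n-1}}\prod_{j=1}^{n-1}\lambda_j .
\]

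Next I use the Matrix--Tree Theorem in the form \(\tau(G)=\frac1n\prod_{j=1}^{n-1}\lambda_j\). Rearranging it gives
\[
\prod_{j=1}^{n-1}\lambda_j = n\tau(G).
\]
Substituting this into the right-hand side of the proposition yields \(n\tau(G)/d^{\,n-1}\), which is the claim.

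The only point needing care is that both sides are well defined. Since \(d\ge1\), evaluating at \(u=1/d\) makes sense. The reciprocal \(\widetilde{\zeta}_G(1/d)^{-1}\) is the finite product \(\prod_{j\ge1}(1-\mu_j/d)\), so no inverse of zero occurs. In fact this product is nonzero precisely because \(G\) is connected, consistent with \(\tau(G)\ge1\). There is no real obstacle; the argument is a direct substitution.
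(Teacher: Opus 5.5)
Your proposal is correct and follows the paper's own argument exactly: the paper also obtains the corollary by substituting Kirchhoff's Matrix--Tree Theorem, \(\prod_{j=1}^{n-1}\lambda_j=n\tau(G)\), into Proposition~\ref{prop:reduced-special-value}. Your additional remark that connectedness keeps every factor \(1-\mu_j/d\) (\(j\ge1\)) nonzero is a harmless and correct piece of bookkeeping.
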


\begin{remark}
\label{rem:special-value-information}
\label{rem:full-function-information}
The special value in Corollary
\ref{cor:reduced-spanning-tree} is a fixed normalization of the
spanning-tree number.
The full function, however, also determines other spectral quantities.
The following proposition gives one such example.
\end{remark}

Recall that the Kirchhoff index has the spectral representation
\[
\operatorname{Kf}(G)
=
n\sum_{j=1}^{n-1}
\frac{1}{\lambda_j}.
\]

\begin{proposition}
\label{prop:logarithmic-derivative}
Let \(G\) be a connected \(d\)-regular graph on \(n\) vertices, where
\(d\geq1\).
Then
\[
\operatorname{Kf}(G)
=
\frac{n}{d^2}
\left\{
\left.
\frac{\widetilde{\zeta}_G'(u)}
     {\widetilde{\zeta}_G(u)}
\right|_{u=1/d}
+
d(n-1)
\right\}.
\]
\end{proposition}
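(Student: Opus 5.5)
We need to prove the Kirchhoff index formula from the logarithmic derivative. The plan is to differentiate the logarithm of the reduced spectral function directly from Definition~\ref{def:reduced-spectral-function} and then evaluate at \(u=1/d\), rewriting each term through the Laplacian eigenvalues. Since \(\log\widetilde{\zeta}_G(u)=-\sum_{j=1}^{n-1}\log(1-u\mu_j)\) near \(u=1/d\), we obtain
\[
\frac{\widetilde{\zeta}_G'(u)}{\widetilde{\zeta}_G(u)}
=
\sum_{j=1}^{n-1}\frac{\mu_j}{1-u\mu_j}.
\]
This step is legitimate because connectedness makes \(\mu_0=d\) simple. Hence \(\mu_j\neq d\) for \(j\ge1\), so \(1-u\mu_j\neq0\) at \(u=1/d\), and \(\widetilde{\zeta}_G\) is analytic and nonvanishing in a neighbourhood of \(1/d\). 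Alternatively, one can differentiate the finite product directly by the product rule, which avoids any branch issues for the logarithm.

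Next, evaluate at \(u=1/d\). As in the proof of Proposition~\ref{prop:reduced-special-value}, we have \(1-\mu_j/d=\lambda_j/d\), so each summand becomes
\[
\frac{d\mu_j}{\lambda_j}.
\]
Writing \(\mu_j=d-\lambda_j\) then gives
\[
\frac{d\mu_j}{\lambda_j}=\frac{d^2}{\lambda_j}-d,
\]
and summing over \(j=1,\ldots,n-1\) yields
\[
\left.\frac{\widetilde{\zeta}_G'}{\widetilde{\zeta}_G}\right|_{u=1/d}
=
d^2\sum_{j=1}^{n-1}\frac{1}{\lambda_j}-d(n-1).
\]

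Finally, add \(d(n-1)\) to both sides, multiply by \(n/d^2\), and compare with the spectral representation \(\operatorname{Kf}(G)=n\sum_{j\ge1}\lambda_j^{-1}\). This gives exactly the claimed identity. There is no real obstacle here; the only point needing care is the justification that no denominator \(\lambda_j\) vanishes, which follows from the simplicity of the eigenvalue \(d\).
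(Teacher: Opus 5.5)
Your proposal is correct and follows essentially the same route as the paper's proof: take the logarithmic derivative, evaluate at \(u=1/d\), substitute \(\mu_j=d-\lambda_j\) to get \(d^2\sum_{j\ge1}\lambda_j^{-1}-d(n-1)\), and compare with the spectral formula for \(\operatorname{Kf}(G)\). Your extra remark that connectedness makes \(d\) a simple eigenvalue, so \(\lambda_j\neq0\) for \(j\ge1\), makes explicit a point the paper leaves implicit.
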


\begin{proof}
Taking the logarithmic derivative gives
\[
\frac{\widetilde{\zeta}_G'(u)}
     {\widetilde{\zeta}_G(u)}
=
\sum_{j=1}^{n-1}
\frac{\mu_j}{1-u\mu_j}.
\]
Since \(\mu_j=d-\lambda_j\), we obtain
\[
\begin{aligned}
\left.
\frac{\widetilde{\zeta}_G'(u)}
     {\widetilde{\zeta}_G(u)}
\right|_{u=1/d}
&=
d\sum_{j=1}^{n-1}
\frac{d-\lambda_j}{\lambda_j}
\\
&=
d^2\sum_{j=1}^{n-1}
\frac{1}{\lambda_j}
-
d(n-1).
\end{aligned}
\]
The assertion follows from the spectral formula for
\(\operatorname{Kf}(G)\).
\end{proof}

\subsection{Chebyshev polynomials}
\label{subsec:chebyshev-polynomials}

The Chebyshev polynomials of the first kind are defined by
\[
T_r(\cos\theta)
=
\cos(r\theta).
\]
They satisfy
\[
T_0(x)=1,
\qquad
T_1(x)=x,
\qquad
T_{r+1}(x)
=
2xT_r(x)-T_{r-1}(x).
\]
For their use in spanning-tree formulas, see
\cite{BoeschProdinger1986,ZhangYongGolin2005}.

\begin{lemma}
\label{lem:chebyshev-cos}
If
\[
x=2\cos\theta,
\]
then, for every integer \(r\geq0\),
\[
2\cos(r\theta)
=
2T_r\left(\frac{x}{2}\right).
\]
\end{lemma}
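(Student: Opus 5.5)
The identity \(2\cos(r\theta)=2T_r(x/2)\) follows directly from the defining relation \(T_r(\cos\theta)=\cos(r\theta)\). The plan is to substitute \(\cos\theta=x/2\) into this relation and multiply by \(2\). The only point to check is that the defining relation holds as a polynomial identity for every integer \(r\ge 0\). This is not assumed from the definition; it is proved from the recurrence \(T_0=1\), \(T_1=x\), \(T_{r+1}=2xT_r-T_{r-1}\).

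We argue by induction on \(r\). The cases \(r=0\) and \(r=1\) hold trivially: \(T_0(\cos\theta)=1=\cos 0\) and \(T_1(\cos\theta)=\cos\theta\). For the inductive step we use the sum-to-product identity
\[
\cos((r+1)\theta)+\cos((r-1)\theta)=2\cos\theta\cos(r\theta).
\]
Combined with the recurrence and the induction hypothesis, it gives
\[
T_{r+1}(\cos\theta)=2\cos\theta\,T_r(\cos\theta)-T_{r-1}(\cos\theta)
=2\cos\theta\cos(r\theta)-\cos((r-1)\theta)=\cos((r+1)\theta).
\]
So \(T_r(\cos\theta)=\cos(r\theta)\) for all real \(\theta\) and all \(r\ge0\).

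Now take \(x=2\cos\theta\). Then \(x/2=\cos\theta\), and hence \(2T_r(x/2)=2T_r(\cos\theta)=2\cos(r\theta)\), which is the claim.

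This argument has no real obstacle. The only care needed is to treat \(T_r\) as the polynomial fixed by the recurrence and evaluate it at \(x/2\), rather than read the trigonometric formula as a definition valid only for \(|x|\le 2\). Since \(x=2\cos\theta\) is real with \(|x/2|\le1\), the evaluation is unambiguous. If desired, one can also note the equivalent form \(2T_r(\tfrac12(z+z^{-1}))=z^r+z^{-r}\) for \(z=e^{i\theta}\). It follows from the same recurrence, because \(z^{r+1}+z^{-r-1}=(z+z^{-1})(z^r+z^{-r})-(z^{r-1}+z^{-r+1})\). By polynomial identity it then holds for all nonzero complex \(z\), which is the form presumably used later with the \(\rho_r\).
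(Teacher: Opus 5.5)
Your proposal is correct and matches the paper's proof, which simply substitutes \(x/2=\cos\theta\) into the defining identity \(T_r(\cos\theta)=\cos(r\theta)\). Your induction from the three-term recurrence is not needed there, because the paper takes the trigonometric relation as the definition and states the recurrence as a property. It is still a valid check that the two characterizations agree, and your remark \(2T_r\bigl(\tfrac12(z+z^{-1})\bigr)=z^r+z^{-r}\) is exactly the form the paper later uses for the \(\rho_r\).
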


\begin{proof}
This follows directly from
\[
\frac{x}{2}=\cos\theta
\]
and the defining identity of \(T_r\).
\end{proof}

We shall also use
\[
T_r'(1)=r^2
\qquad
(r\geq1).
\]
Indeed,
\[
T_r'(x)
=
rU_{r-1}(x),
\]
where \(U_{r-1}(x)\) is the Chebyshev polynomial of the second kind, and
\[
U_{r-1}(1)=r.
\]

\section{Cyclic circulant graphs and Chebyshev polynomials}
\label{sec:circulant}

We fix the notation for cyclic circulant graphs and record the Fourier
formulas used below.
For related spectral and Chebyshev formulas, see
\cite{BoeschProdinger1986,ZhangYongGolin2005,MednykhMednykh2019}.

Let \(S\) be a nonempty finite set of positive integers and put
\[
q=\max S.
\]
Throughout the paper, we assume that
\[
n>2q.
\]
Then the residues
\[
\{\pm s:s\in S\}
\]
are nonzero and pairwise distinct modulo \(n\).

\begin{definition}
\label{def:cyclic-circulant}
The cyclic circulant graph associated with \(S\) is
\[
G_n(S)
=
\operatorname{Cay}
\left(
\mathbb Z_n,\{\pm s:s\in S\}
\right).
\]
Thus,
\[
V(G_n(S))=\mathbb Z_n,
\]
and two vertices \(a,b\in\mathbb Z_n\) are adjacent if
\[
a-b\equiv\pm s\pmod n
\]
for some \(s\in S\).
\end{definition}

The graph \(G_n(S)\) is a simple \(2|S|\)-regular graph.
We write
\[
d=2|S|.
\]
Moreover, it is connected if and only if
\[
\gcd(n,s_1,\ldots,s_t)=1,
\]
where
\[
S=\{s_1,\ldots,s_t\}.
\]
The algebraic product sequences introduced later are defined without
assuming connectedness.
Connectedness will be imposed only when spanning-tree numbers or the
reduced spectral function are considered.

\begin{remark}
\label{rem:small-n}
The condition \(n>2q\) ensures that one fixed step set \(S\) determines a
simple \(2|S|\)-regular graph for every admissible \(n\).
When \(n\leq2q\), distinct elements of
\[
\{\pm s:s\in S\}
\]
may coincide modulo \(n\), and a step may satisfy
\[
s\equiv-s\pmod n.
\]
If the steps are retained with multiplicities, the Fourier formulas remain
valid for the corresponding weighted Cayley multigraph.
For the underlying simple graph, however, coincident residues must be
merged, so both the degree and the polynomial \(P_S(x)\) must be modified.
We therefore retain the assumption \(n>2q\).
\end{remark}

\subsection{Adjacency spectrum}
\label{subsec:adjacency-spectrum}

\begin{proposition}
\label{prop:adjacency-spectrum}
Let
\[
\omega_n=e^{2\pi i/n}.
\]
The adjacency eigenvalues of \(G_n(S)\), indexed by the Fourier modes, are
\[
\mu_j
=
\sum_{s\in S}
\left(
\omega_n^{js}+\omega_n^{-js}
\right)
=
2\sum_{s\in S}
\cos\frac{2\pi js}{n},
\qquad
j=0,1,\ldots,n-1.
\]
In particular,
\[
\mu_0=2|S|=d.
\]
\end{proposition}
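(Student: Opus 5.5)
The statement to be proved is Proposition~\ref{prop:adjacency-spectrum}: the adjacency eigenvalues of \(G_n(S)\) are the Fourier sums \(\mu_j\). The plan is to exhibit the characters of \(\mathbb Z_n\) as a complete system of eigenvectors of the adjacency matrix \(A=A(G_n(S))\).

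Index rows and columns of \(A\) by \(\mathbb Z_n\). By Definition~\ref{def:cyclic-circulant}, \(A_{a,b}=1\) exactly when \(a-b\equiv\pm s\pmod n\) for some \(s\in S\), and \(A_{a,b}=0\) otherwise. Since \(n>2q\), the \(2|S|\) residues \(\{\pm s:s\in S\}\) are nonzero and pairwise distinct. Hence each row \(a\) has its ones exactly in the columns \(a+t\), where \(t\) runs over \(\{\pm s: s\in S\}\), each such column appearing once. This is the only point where the hypothesis \(n>2q\) enters: without it, merged residues would change both the row sums and the eigenvalue formula, as noted in Remark~\ref{rem:small-n}.

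For \(j=0,\dots,n-1\), define \(v_j\in\mathbb C^{\mathbb Z_n}\) by \((v_j)_b=\omega_n^{jb}\). This is well defined on residues because \(\omega_n^n=1\). Then
\[
(Av_j)_a=\sum_{s\in S}\left(\omega_n^{j(a+s)}+\omega_n^{j(a-s)}\right)=\omega_n^{ja}\sum_{s\in S}\left(\omega_n^{js}+\omega_n^{-js}\right)=\mu_j\,(v_j)_a .
\]
Writing \(\omega_n^{js}+\omega_n^{-js}=2\cos(2\pi js/n)\) gives the cosine form, and setting \(j=0\) gives \(\mu_0=2|S|=d\).

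It remains to check that these \(n\) eigenvectors account for the whole spectrum with multiplicities. The vectors \(v_0,\dots,v_{n-1}\) form the columns of the Vandermonde (discrete Fourier) matrix on the distinct nodes \(\omega_n^0,\dots,\omega_n^{n-1}\). Equivalently, they are pairwise orthogonal, since \(\sum_b\omega_n^{(j-k)b}=0\) for \(j\ne k\). Either way they are linearly independent, so they form a basis of \(\mathbb C^n\) diagonalizing \(A\). Therefore the multiset \(\{\mu_0,\dots,\mu_{n-1}\}\) is exactly the adjacency spectrum, indexed by the Fourier modes.

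There is no real obstacle here. The only care needed is the bookkeeping that each \(\pm s\) contributes exactly one term, which is guaranteed by \(n>2q\).
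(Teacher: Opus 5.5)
Your proposal is correct and follows essentially the same route as the paper: the characters \(v_j(a)=\omega_n^{ja}\) are eigenvectors of \(A\) with eigenvalue \(\mu_j\), and they form a basis of \(\mathbb C^{\mathbb Z_n}\). You also spell out two points the paper leaves implicit: the use of \(n>2q\) to ensure each \(\pm s\) contributes exactly once, and the linear independence of the \(v_j\) via the Vandermonde or orthogonality argument.
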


\begin{proof}
For \(j=0,1,\ldots,n-1\), define
\[
v_j(a)=\omega_n^{ja},
\qquad
a\in\mathbb Z_n.
\]
Then
\[
\begin{aligned}
(Av_j)(a)
&=
\sum_{s\in S}
\left\{
v_j(a+s)+v_j(a-s)
\right\}
\\
&=
\left\{
\sum_{s\in S}
\left(
\omega_n^{js}+\omega_n^{-js}
\right)
\right\}
v_j(a).
\end{aligned}
\]
Since
\[
v_0,v_1,\ldots,v_{n-1}
\]
form a basis of \(\mathbb C^{\mathbb Z_n}\), this gives the full adjacency
spectrum.
The cosine form follows from
\[
\omega_n^{js}+\omega_n^{-js}
=
2\cos\frac{2\pi js}{n}.
\]
\end{proof}

\subsection{Laplacian spectrum and the Chebyshev-type polynomial}
\label{subsec:laplacian-chebyshev}

\begin{definition}
\label{def:PS}
The Chebyshev-type polynomial associated with \(S\) is
\[
P_S(x)
=
2|S|
-
2\sum_{s\in S}
T_s\left(\frac{x}{2}\right).
\]
\end{definition}

\begin{proposition}
\label{prop:PS-basic}
Let \(q=\max S\).
Then
\[
P_S(x)\in\mathbb Z[x],
\qquad
\deg P_S=q,
\]
and the leading coefficient of \(P_S(x)\) is \(-1\).
Moreover,
\[
P_S(2)=0.
\]
\end{proposition}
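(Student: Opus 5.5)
The plan is to reduce everything to the single fact that \(2T_s(x/2)\) is a polynomial in \(\mathbb Z[x]\) of degree \(s\) with leading coefficient \(1\). From that, integrality, the degree and the leading coefficient of \(P_S\) follow at once, and the value at \(2\) comes from Lemma~\ref{lem:chebyshev-cos}.

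\emph{Step 1: the normalized Chebyshev polynomial.} Put \(C_r(x)=2T_r(x/2)\). The recurrence \(T_{r+1}(y)=2yT_r(y)-T_{r-1}(y)\), with \(y=x/2\) and multiplied by \(2\), gives
\[
C_0(x)=2,\qquad C_1(x)=x,\qquad C_{r+1}(x)=xC_r(x)-C_{r-1}(x).
\]
By induction on \(r\geq1\), \(C_r\in\mathbb Z[x]\) is monic of degree exactly \(r\). The base cases are \(C_1=x\) and \(C_2=x^2-2\). In the inductive step, \(xC_r\) is monic of degree \(r+1\), while \(C_{r-1}\) has degree \(r-1<r+1\) and integer coefficients.

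\emph{Step 2: integrality, degree and leading coefficient.} By definition,
\[
P_S(x)=2|S|-\sum_{s\in S}C_s(x).
\]
This is a sum of integer polynomials, so \(P_S(x)\in\mathbb Z[x]\). Since the elements of \(S\) are distinct positive integers, only \(C_q\) contributes degree \(q=\max S\), and every other \(C_s\) has strictly smaller degree. The constant \(2|S|\) has degree \(0<q\) because \(q\geq1\). Hence \(\deg P_S=q\), and the leading coefficient of \(P_S\) is \(-1\), coming from \(-C_q\).

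\emph{Step 3: the root at \(x=2\).} Take \(x=2=2\cos 0\), that is, \(\theta=0\) in Lemma~\ref{lem:chebyshev-cos}. Then \(C_s(2)=2\cos 0=2\) for every \(s\). Equivalently, \(T_s(1)=1\). Therefore
\[
P_S(2)=2|S|-2|S|=0.
\]

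The only step needing any care is the degree claim in Step 2. It requires that no cancellation of top terms occurs, and this holds because the elements of \(S\) are distinct, so \(\max S\) is attained by exactly one summand. Simplicity of the root, which is not part of this statement, would follow later from \(P_S'(2)=-\sum_s T_s'(1)=-\sum_s s^2\neq0\).
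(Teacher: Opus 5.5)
Your proposal is correct and follows essentially the same route as the paper: define \(C_r(x)=2T_r(x/2)\), derive the integer recurrence \(C_{r+1}=xC_r-C_{r-1}\), conclude that \(C_r\) is monic of degree \(r\) in \(\mathbb Z[x]\), read off the degree and leading coefficient \(-1\) from the unique top term \(-C_q\), and get \(P_S(2)=0\) from \(T_s(1)=1\).
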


\begin{proof}
For \(r\geq0\), put
\[
C_r(x)
=
2T_r\left(\frac{x}{2}\right).
\]
Then
\[
C_0(x)=2,
\qquad
C_1(x)=x,
\qquad
C_{r+1}(x)
=
xC_r(x)-C_{r-1}(x).
\]
It follows inductively that, for every \(r\geq1\),
\[
C_r(x)\in\mathbb Z[x]
\]
is monic of degree \(r\).

Since \(q\) is the largest element of \(S\), the term
\[
-C_q(x)
=
-2T_q\left(\frac{x}{2}\right)
\]
is the unique term of degree \(q\) in \(P_S(x)\).
Hence \(P_S(x)\) has degree \(q\) and leading coefficient \(-1\).

Finally,
\[
T_s(1)=1
\]
for every \(s\in S\), and therefore
\[
P_S(2)
=
2|S|
-
2\sum_{s\in S}T_s(1)
=
0.
\]
\end{proof}

\begin{proposition}
\label{thm:laplacian-chebyshev}
The Laplacian eigenvalues of \(G_n(S)\), indexed by the Fourier modes, are
\[
\lambda_j
=
P_S\left(
2\cos\frac{2\pi j}{n}
\right),
\qquad
j=0,1,\ldots,n-1.
\]
\end{proposition}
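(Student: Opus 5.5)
The plan is to compute the Laplacian directly as \(L=dI-A\), with \(d=2|S|\), on the Fourier basis \(v_j\) from Proposition~\ref{prop:adjacency-spectrum}, and then to rewrite each resulting eigenvalue as a value of \(P_S\).

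First, Proposition~\ref{prop:adjacency-spectrum} shows that each \(v_j\) is an eigenvector of \(A\) with eigenvalue
\[
\mu_j=2\sum_{s\in S}\cos\frac{2\pi js}{n}.
\]
Because \(L=dI-A\), the same vector \(v_j\) is an eigenvector of \(L\), with eigenvalue
\[
\lambda_j=2|S|-2\sum_{s\in S}\cos\frac{2\pi js}{n}.
\]
The vectors \(v_0,\dots,v_{n-1}\) form a basis of \(\mathbb C^{\mathbb Z_n}\), so this list is the full Laplacian spectrum, indexed by the Fourier modes.

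Next, set \(\theta=2\pi j/n\) and \(x=2\cos\theta\). Lemma~\ref{lem:chebyshev-cos} gives \(2\cos(s\theta)=2T_s(x/2)\) for each \(s\in S\). Summing over \(s\in S\) and comparing with Definition~\ref{def:PS}, we get
\[
\lambda_j=2|S|-2\sum_{s\in S}T_s\!\left(\frac{x}{2}\right)=P_S\!\left(2\cos\frac{2\pi j}{n}\right).
\]
This is the claim, so the proof is complete.

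The argument has no real obstacle. The only point to note is that every \(s\in S\) is a positive integer, so Lemma~\ref{lem:chebyshev-cos} applies to each term. The assumption \(n>2q\) is used only through Proposition~\ref{prop:adjacency-spectrum}, where it ensures that the steps \(\pm s\) are distinct modulo \(n\). That makes the adjacency formula without multiplicities correct, and it also gives \(d=2|S|\).
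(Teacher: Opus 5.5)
Your proof is correct and follows essentially the same route as the paper: write \(\lambda_j=d-\mu_j\) using the Fourier eigenvalues from Proposition~\ref{prop:adjacency-spectrum}, then rewrite each \(2\cos(2\pi js/n)\) as \(2T_s(x_j/2)\) via Lemma~\ref{lem:chebyshev-cos} and compare with the definition of \(P_S\). Your explicit remark that the \(v_j\) are simultaneous eigenvectors of \(A\) and \(L=dI-A\) and form a basis spells out the indexing by Fourier modes, which the paper leaves implicit.
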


\begin{proof}
Since \(G_n(S)\) is \(d=2|S|\)-regular,
\[
\lambda_j=d-\mu_j.
\]
By Proposition \ref{prop:adjacency-spectrum},
\[
\lambda_j
=
2|S|
-
2\sum_{s\in S}
\cos\frac{2\pi js}{n}.
\]
Put
\[
x_j
=
2\cos\frac{2\pi j}{n}.
\]
Lemma \ref{lem:chebyshev-cos} gives
\[
2\cos\frac{2\pi js}{n}
=
2T_s\left(\frac{x_j}{2}\right),
\]
and hence
\[
\lambda_j
=
2|S|
-
2\sum_{s\in S}
T_s\left(\frac{x_j}{2}\right)
=
P_S(x_j).
\]
\end{proof}

\begin{remark}
\label{rem:standard-fourier-role}
The eigenvalues above are indexed by the Fourier modes
\(j=0,\ldots,n-1\).
Thus, if distinct Fourier modes produce the same numerical eigenvalue, that
value is retained with its spectral multiplicity.
This convention is used in all subsequent products over \(j\).
\end{remark}

\section{Special values of the reduced spectral function}
\label{sec:special-value}

We introduce two algebraic product sequences associated with the fixed
step set \(S\).
They are defined for every \(n>2q\), independently of whether the
corresponding circulant graph is connected.

\begin{definition}
\label{def:Mn}
For every integer \(n>2q\), define
\[
M_n(S)
=
\prod_{j=1}^{n-1}
P_S\left(
2\cos\frac{2\pi j}{n}
\right).
\]
\end{definition}

By Proposition \ref{thm:laplacian-chebyshev}, \(M_n(S)\) is the product of
the Laplacian eigenvalues indexed by the nonzero Fourier modes.
Consequently, if \(G_n(S)\) is disconnected, then
\[
M_n(S)=0.
\]

\begin{proposition}
\label{prop:Mn-spanning-tree}
If \(G_n(S)\) is connected, then
\[
M_n(S)
=
n\tau(G_n(S))
=
(2|S|)^{n-1}
\widetilde{\zeta}_{G_n(S)}
\left(
\frac{1}{2|S|}
\right)^{-1}.
\]
\end{proposition}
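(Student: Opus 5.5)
The plan is to identify the product \(M_n(S)\) with the product of the nonzero Laplacian eigenvalues, then apply Corollary~\ref{cor:reduced-spanning-tree}.

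\emph{Step 1 (identify the product).} By Proposition~\ref{thm:laplacian-chebyshev}, each factor \(P_S(2\cos(2\pi j/n))\) equals the Laplacian eigenvalue \(\lambda_j\) of \(G_n(S)\) attached to the Fourier mode \(j\). Hence
\[
M_n(S)=\prod_{j=1}^{n-1}\lambda_j .
\]
By the convention of Remark~\ref{rem:standard-fourier-role}, eigenvalues are taken with their spectral multiplicities. The index \(j=0\) gives \(\lambda_0=P_S(2)=0\) (Proposition~\ref{prop:PS-basic}). So the product over \(j=1,\ldots,n-1\) is exactly the product of the remaining \(n-1\) Laplacian eigenvalues in the ordering used in Section~\ref{sec:preliminaries}, where \(\mu_0=d\).

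\emph{Step 2 (first equality).} Assume \(G_n(S)\) is connected. Then \(\lambda_0=0\) is the simple zero eigenvalue, and Kirchhoff's Matrix--Tree Theorem gives \(\tau(G)=\frac1n\prod_{j=1}^{n-1}\lambda_j\). Therefore \(M_n(S)=n\tau(G_n(S))\).

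\emph{Step 3 (second equality).} The graph \(G_n(S)\) is connected and \(d\)-regular with \(d=2|S|\geq 2\). Corollary~\ref{cor:reduced-spanning-tree} therefore applies and gives
\[
\widetilde{\zeta}_{G_n(S)}(1/d)^{-1}=\frac{n\tau(G_n(S))}{d^{\,n-1}}.
\]
Multiplying both sides by \(d^{n-1}=(2|S|)^{n-1}\) yields the second equality.

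There is no real obstacle. The only point needing care is the matching of indices: the ordering \(\mu_0=d\) in Definition~\ref{def:reduced-spectral-function} must correspond to Fourier mode \(j=0\), which holds by Proposition~\ref{prop:adjacency-spectrum}. Connectedness is needed only so that no other \(\lambda_j\) vanishes and the Matrix--Tree formula applies in the stated form.
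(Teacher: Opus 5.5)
Your proposal is correct and follows essentially the same route as the paper. It identifies \(M_n(S)\) with the product of the nonzero Laplacian eigenvalues via Proposition~\ref{thm:laplacian-chebyshev}, then uses Kirchhoff's Matrix--Tree Theorem for the first equality and Corollary~\ref{cor:reduced-spanning-tree}, multiplied by \((2|S|)^{n-1}\), for the second.
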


\begin{proof}
When \(G_n(S)\) is connected, \(M_n(S)\) is the product of all nonzero
Laplacian eigenvalues.
The first equality follows from Kirchhoff's Matrix--Tree Theorem, and the
second from Corollary \ref{cor:reduced-spanning-tree}.
\end{proof}

\begin{definition}
\label{def:Bn}
For every integer \(n>2q\), define
\[
B_n(S)
=
\frac{M_n(S)}{n^2}.
\]
\end{definition}

\begin{corollary}
\label{cor:Bn-interpretation}
If \(G_n(S)\) is connected, then
\[
B_n(S)
=
\frac{\tau(G_n(S))}{n}
=
\frac{(2|S|)^{n-1}}{n^2}
\widetilde{\zeta}_{G_n(S)}
\left(
\frac{1}{2|S|}
\right)^{-1}.
\]
\end{corollary}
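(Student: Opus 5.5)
The corollary follows by dividing the identities of Proposition~\ref{prop:Mn-spanning-tree} by \(n^2\); no new spectral input is needed. Assume that \(G_n(S)\) is connected, with \(n>2q\) as usual, so that \(G_n(S)\) is a simple connected \(2|S|\)-regular graph. Proposition~\ref{prop:Mn-spanning-tree} then applies and gives
\[
M_n(S)=n\tau(G_n(S))=(2|S|)^{n-1}\,\widetilde{\zeta}_{G_n(S)}\left(\frac{1}{2|S|}\right)^{-1}.
\]

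By Definition~\ref{def:Bn}, \(B_n(S)=M_n(S)/n^2\), and \(n\neq 0\). Dividing the three equal expressions above by \(n^2\) gives
\[
B_n(S)=\frac{n\tau(G_n(S))}{n^2}=\frac{\tau(G_n(S))}{n}
\]
in the first place, and
\[
B_n(S)=\frac{(2|S|)^{n-1}}{n^2}\,\widetilde{\zeta}_{G_n(S)}\left(\frac{1}{2|S|}\right)^{-1}
\]
in the second. These are exactly the two claimed equalities.

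There is no genuine obstacle here. The only point to check is that Proposition~\ref{prop:Mn-spanning-tree} is applicable, i.e., that its hypotheses hold. Connectedness is assumed. Regularity of degree \(d=2|S|\) holds because \(n>2q\) makes the residues \(\pm s\) (\(s\in S\)) nonzero and pairwise distinct modulo \(n\). Together these ensure that \(d\) is a simple adjacency eigenvalue, so evaluating \(\widetilde{\zeta}\) at \(u=1/d\) is legitimate, as in Corollary~\ref{cor:reduced-spanning-tree}.
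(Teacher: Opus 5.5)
Your proposal is correct and follows the same route as the paper, which obtains the corollary immediately from Definition~\ref{def:Bn} by dividing the identities of Proposition~\ref{prop:Mn-spanning-tree} by \(n^2\). Your additional check that the hypotheses of that proposition hold (connectedness is assumed, and \(2|S|\)-regularity follows from \(n>2q\)) is harmless.
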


\begin{proof}
This follows immediately from Definition \ref{def:Bn} and Proposition
\ref{prop:Mn-spanning-tree}.
\end{proof}

\begin{remark}
\label{rem:Bn-connectivity}
\label{rem:role-of-Mn}
The sequences \(M_n(S)\) and \(B_n(S)\) are defined algebraically for all
\(n>2q\).
If
\[
\gcd(n,s_1,\ldots,s_t)>1,
\]
then \(G_n(S)\) is disconnected and
\[
M_n(S)=B_n(S)=0.
\]
If the graph is connected, Corollary \ref{cor:Bn-interpretation} gives
their spanning-tree and reduced-spectral-function interpretations.

The recurrence analysis below concerns the full algebraic sequence
\[
\{B_n(S)\}_{n>2q},
\]
including the indices for which the corresponding graph is disconnected.
\end{remark}

\section{Root expression and linear recurrence structure}
\label{sec:root-recurrence}

We factor the algebraic product \(M_n(S)\) in terms of the roots of a
polynomial obtained from \(P_S(x)\), and then use the resulting exponential
representation to construct an integer annihilating polynomial for
\(B_n(S)\).

Let
\[
q=\max S.
\]

\subsection{The reduced polynomial}
\label{subsec:reduced-polynomial}

Since \(P_S(2)=0\), the polynomial \(2-x\) divides \(P_S(x)\) in
\(\mathbb Z[x]\).

\begin{definition}
\label{def:QS}
Define
\[
Q_S(x)
=
\frac{P_S(x)}{2-x}.
\]
\end{definition}

\begin{proposition}
\label{prop:QS-basic}
The polynomial \(Q_S(x)\) is monic, belongs to \(\mathbb Z[x]\), and has
degree \(q-1\).
Moreover,
\[
Q_S(2)
=
\sum_{s\in S}s^2
\neq0.
\]
In particular, \(x=2\) is a simple root of \(P_S(x)\).
\end{proposition}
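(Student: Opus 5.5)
We must prove Proposition \ref{prop:QS-basic}: that \(Q_S\) is monic, lies in \(\mathbb Z[x]\), has degree \(q-1\), that \(Q_S(2)=\sum_{s\in S}s^2\neq0\), and that \(x=2\) is a simple root of \(P_S\).

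The first three properties come from polynomial division by a monic linear factor. By Proposition \ref{prop:PS-basic}, \(P_S\in\mathbb Z[x]\) has degree \(q\), leading coefficient \(-1\), and satisfies \(P_S(2)=0\).

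\begin{itemize}
\item Dividing \(P_S(x)\) by the monic polynomial \(x-2\) in \(\mathbb Z[x]\) gives \(P_S(x)=(x-2)R(x)\) with \(R\in\mathbb Z[x]\). The remainder is \(P_S(2)=0\), and the division algorithm stays in \(\mathbb Z[x]\) because the divisor is monic.
\item Hence \(Q_S=-R\lies in \(\mathbb Z[x]\) and has degree \(q-1\).
\item The leading coefficient of \(R\) equals that of \(P_S\), namely \(-1\). So \(Q_S\) has leading coefficient \(+1\), i.e.\ it is monic.
\end{itemize}

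To evaluate \(Q_S(2)\), use \(P_S(x)=(2-x)Q_S(x)\). Differentiating and setting \(x=2\) gives
\[
Q_S(2)=-P_S'(2).
\]
From Definition \ref{def:PS},
\[
P_S'(x)=-\sum_{s\in S}T_s'\!\left(\frac{x}{2}\right),
\]
since the factor \(2\) in front of \(T_s\) cancels the factor \(\tfrac12\) from the chain rule. Therefore
\[
P_S'(2)=-\sum_{s\in S}T_s'(1)=-\sum_{s\in S}s^2,
\]
using the identity \(T_r'(1)=r^2\) for \(r\geq1\) recorded in Section \ref{subsec:chebyshev-polynomials}. This gives \(Q_S(2)=\sum_{s\in S}s^2\). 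The sum is positive, since \(S\) is nonempty and consists of positive integers, so \(Q_S(2)\neq0\).

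Finally, \(x=2\) is a simple root of \(P_S\). If it had multiplicity at least \(2\), then \((x-2)\) would divide \(Q_S\), forcing \(Q_S(2)=0\), which contradicts the previous paragraph.

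The only step that needs care is the derivative computation: keeping track of the chain-rule factor and the sign. The rest follows directly from Proposition \ref{prop:PS-basic}.
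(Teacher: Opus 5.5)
Your proof is correct and follows essentially the same route as the paper. You divide by the linear factor using Proposition~\ref{prop:PS-basic}, then differentiate \(P_S(x)=(2-x)Q_S(x)\) at \(x=2\) and use \(T_s'(1)=s^2\) to get \(Q_S(2)=\sum_{s\in S}s^2\); your explicit argument that a double root at \(2\) would force \(Q_S(2)=0\) simply spells out the ``in particular'' clause that the paper leaves implicit.
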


\begin{proof}
By Proposition \ref{prop:PS-basic},
\[
P_S(x)\in\mathbb Z[x],
\qquad
P_S(2)=0.
\]
Hence division by \(2-x\) gives a polynomial in \(\mathbb Z[x]\).
Since both \(P_S(x)\) and \(2-x\) have leading coefficient \(-1\),
\(Q_S(x)\) is monic and
\[
\deg Q_S=q-1.
\]

Differentiating
\[
P_S(x)
=
2|S|
-
2\sum_{s\in S}
T_s\left(\frac{x}{2}\right)
\]
gives
\[
P_S'(x)
=
-
\sum_{s\in S}
T_s'\left(\frac{x}{2}\right).
\]
Using
\[
T_s'(1)=s^2,
\]
we obtain
\[
P_S'(2)
=
-
\sum_{s\in S}s^2.
\]
On the other hand, differentiating
\[
P_S(x)=(2-x)Q_S(x)
\]
and substituting \(x=2\) gives
\[
P_S'(2)=-Q_S(2).
\]
Therefore,
\[
Q_S(2)
=
\sum_{s\in S}s^2
\neq0.
\]
\end{proof}

Let
\[
\phi_1,\ldots,\phi_{q-1}
\]
be the roots of \(Q_S(x)\), counted with multiplicities.
Since \(Q_S(x)\) is monic,
\[
Q_S(x)
=
\prod_{r=1}^{q-1}(x-\phi_r),
\]
and hence
\[
P_S(x)
=
(2-x)
\prod_{r=1}^{q-1}(x-\phi_r).
\]
Proposition \ref{prop:QS-basic} ensures that
\[
\phi_r\neq2
\qquad
(r=1,\ldots,q-1).
\]

\subsection{Chebyshev root and exponential representations}
\label{subsec:root-product}

We use the following Chebyshev product identity, which underlies the
circulant spanning-tree formulas in
\cite{BoeschProdinger1986,ZhangYongGolin2005}.

\begin{lemma}
\label{lem:chebyshev-product}
For every \(\phi\neq2\),
\[
\prod_{j=1}^{n-1}
\left(
2\cos\frac{2\pi j}{n}-\phi
\right)
=
(-1)^n
\frac{
2T_n\left(\frac{\phi}{2}\right)-2
}{
2-\phi
}.
\]
Moreover,
\[
\prod_{j=1}^{n-1}
\left(
2-2\cos\frac{2\pi j}{n}
\right)
=
n^2.
\]
\end{lemma}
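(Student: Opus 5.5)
The plan is to derive both formulas from a single polynomial identity in \(x\), namely
\[
f_n(x):=2T_n\!\left(\frac{x}{2}\right)-2=\prod_{j=0}^{n-1}\left(x-2\cos\frac{2\pi j}{n}\right),
\]
and then to divide out the \(j=0\) factor (for the first formula) or differentiate at \(x=2\) (for the second).

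\textbf{Step 1: the Laurent identity.} For every \(z\in\mathbb C\setminus\{0\}\),
\[
z^n+z^{-n}=2T_n\!\left(\frac{z+z^{-1}}{2}\right).
\]
For \(z=e^{i\theta}\) this is Lemma~\ref{lem:chebyshev-cos}. Both sides are Laurent polynomials in \(z\) that agree on the unit circle, so they agree identically.

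\textbf{Step 2: factorization.} Put \(\omega=\omega_n\). For fixed \(z\neq0\),
\[
\prod_{j=0}^{n-1}\bigl(z+z^{-1}-\omega^j-\omega^{-j}\bigr)
=\prod_{j=0}^{n-1}(z-\omega^j)\bigl(1-\omega^{-j}z^{-1}\bigr).
\]
The right-hand side splits into two products:
\[
\prod_{j=0}^{n-1}(z-\omega^j)=z^n-1,
\qquad
\prod_{j=0}^{n-1}\bigl(1-\omega^{-j}z^{-1}\bigr)=1-z^{-n}.
\]
The second holds because \(\{\omega^{-j}\}\) is again the full set of \(n\)-th roots of unity. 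Multiplying,
\[
(z^n-1)(1-z^{-n})=z^n+z^{-n}-2.
\]
Given any \(x\in\mathbb C\), choose \(z\neq0\) with \(z+z^{-1}=x\); this is always possible by solving \(z^2-xz+1=0\). Combining with Step 1 yields the displayed identity for \(f_n(x)\) for all \(x\).

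\textbf{Step 3: the first formula.} Substitute \(x=\phi\) and multiply each of the \(n\) factors by \(-1\):
\[
\prod_{j=0}^{n-1}\left(2\cos\frac{2\pi j}{n}-\phi\right)=(-1)^n f_n(\phi)=(-1)^n\bigl(2T_n(\phi/2)-2\bigr).
\]
The \(j=0\) factor is \(2-\phi\), which is nonzero since \(\phi\neq2\). Dividing by it gives exactly
\[
\prod_{j=1}^{n-1}\left(2\cos\frac{2\pi j}{n}-\phi\right)=(-1)^n\frac{2T_n(\phi/2)-2}{2-\phi},
\]
with the sign as stated.

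\textbf{Step 4: the second formula.} Write \(c_j=2\cos(2\pi j/n)\). Since \(c_0=2\), we have \(f_n(x)=(x-2)\,g(x)\) with
\[
g(x)=\prod_{j=1}^{n-1}(x-c_j).
\]
Differentiating and evaluating at \(x=2\) gives \(f_n'(2)=g(2)=\prod_{j\ge1}(2-c_j)\). On the other hand, the chain rule gives
\[
f_n'(x)=T_n'\!\left(\frac{x}{2}\right),
\qquad\text{so}\qquad
f_n'(2)=T_n'(1)=n^2,
\]
using \(T_n'(1)=n^2\) from Subsection~\ref{subsec:chebyshev-polynomials}. Hence \(\prod_{j=1}^{n-1}(2-2\cos(2\pi j/n))=n^2\).

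The only delicate point is Step 2: one must keep the orientation \(x-c_j\) versus \(c_j-x\) straight so that the factor \((-1)^n\) lands correctly. Tracking it as in Step 3 gives the sign in the statement exactly.
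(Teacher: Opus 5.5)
Your proof is correct and follows the same route as the paper. You identify \(2T_n(x/2)-2\) with \(\prod_{j=0}^{n-1}(x-2\cos\frac{2\pi j}{n})\), remove the \(j=0\) factor \(2-\phi\) (picking up \((-1)^n\) from reversing the factors), and obtain the second identity from the derivative at \(x=2\) via \(T_n'(1)=n^2\), which is exactly the paper's limit computation. The one difference is that your Step 2 proves the factorization with the correct multiplicities, via \(x=z+z^{-1}\) and \((z^n-1)(1-z^{-n})=z^n+z^{-n}-2\), whereas the paper simply asserts that the roots of \(2\{T_n(x/2)-1\}\), counted with multiplicity, are the values \(2\cos\frac{2\pi j}{n}\).
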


\begin{proof}
The polynomial
\[
2\left\{
T_n\left(\frac{x}{2}\right)-1
\right\}
\]
is monic of degree \(n\), and its roots, counted with multiplicities, are
\[
2\cos\frac{2\pi j}{n},
\qquad
j=0,1,\ldots,n-1.
\]
Here the list is interpreted as a multiset; for example,
\[
2\cos\frac{2\pi j}{n}
=
2\cos\frac{2\pi(n-j)}{n}.
\]
Therefore,
\[
\prod_{j=0}^{n-1}
\left(
x-2\cos\frac{2\pi j}{n}
\right)
=
2\left\{
T_n\left(\frac{x}{2}\right)-1
\right\}.
\]
Removing the factor corresponding to \(j=0\) gives
\[
\prod_{j=1}^{n-1}
\left(
x-2\cos\frac{2\pi j}{n}
\right)
=
\frac{
2\left\{
T_n\left(\frac{x}{2}\right)-1
\right\}
}{
x-2
}.
\]
Substituting \(x=\phi\) and reversing the signs of the factors proves the
first identity.

Taking the limit as \(x\to2\), we obtain
\[
\begin{aligned}
\prod_{j=1}^{n-1}
\left(
2-2\cos\frac{2\pi j}{n}
\right)
&=
\lim_{x\to2}
\frac{
2\left\{
T_n\left(\frac{x}{2}\right)-1
\right\}
}{
x-2
}
\\
&=
T_n'(1)
\\
&=
n^2.
\end{aligned}
\]
\end{proof}

\begin{proposition}
\label{thm:root-product}
For every integer \(n>2q\),
\[
M_n(S)
=
n^2
\prod_{r=1}^{q-1}
\left\{
(-1)^n
\frac{
2T_n\left(\frac{\phi_r}{2}\right)-2
}{
2-\phi_r
}
\right\}.
\]
Equivalently,
\[
B_n(S)
=
\prod_{r=1}^{q-1}
\left\{
(-1)^n
\frac{
2T_n\left(\frac{\phi_r}{2}\right)-2
}{
2-\phi_r
}
\right\}.
\]
When \(q=1\), the products over \(r\) are understood to be empty and hence
equal to \(1\).
\end{proposition}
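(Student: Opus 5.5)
The argument factors each Laplacian-type value \(P_S(2\cos\frac{2\pi j}{n})\) through the root factorization of \(P_S\), exchanges the order of the two products, and then evaluates each resulting product over \(j\) with Lemma~\ref{lem:chebyshev-product}.

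Put \(x_j=2\cos\frac{2\pi j}{n}\). From the factorization \(P_S(x)=(2-x)\prod_{r=1}^{q-1}(x-\phi_r)\) recorded after Proposition~\ref{prop:QS-basic}, we have for each \(j=1,\ldots,n-1\)
\[
P_S(x_j)=(2-x_j)\prod_{r=1}^{q-1}(x_j-\phi_r).
\]
Take the product over \(j=1,\ldots,n-1\). All factors are finite complex numbers, so the double product may be reordered freely:
\[
M_n(S)=\prod_{j=1}^{n-1}(2-x_j)\cdot\prod_{r=1}^{q-1}\prod_{j=1}^{n-1}(x_j-\phi_r).
\]

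The second identity of Lemma~\ref{lem:chebyshev-product} gives \(\prod_{j=1}^{n-1}(2-x_j)=n^2\). For each \(r\), Proposition~\ref{prop:QS-basic} guarantees \(\phi_r\neq 2\), so the first identity of Lemma~\ref{lem:chebyshev-product} applies with \(\phi=\phi_r\):
\[
\prod_{j=1}^{n-1}(x_j-\phi_r)=(-1)^n\,\frac{2T_n(\phi_r/2)-2}{2-\phi_r}.
\]
Substituting these evaluations yields the stated formula for \(M_n(S)\). Dividing by \(n^2\), as in Definition~\ref{def:Bn}, gives the formula for \(B_n(S)\). When \(q=1\) the product over \(r\) is empty, so \(P_S(x)=2-x\) and \(M_n(S)=n^2\), in agreement with the convention that the empty product equals \(1\).

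No step here is a real obstacle. The only points needing care are the sign convention, \(\prod_j(x_j-\phi)\) as opposed to \(\prod_j(\phi-x_j)\), which must match Lemma~\ref{lem:chebyshev-product} exactly, and the role of the hypothesis \(\phi_r\neq2\): the formula in the lemma divides by \(2-\phi_r\), so every root must be admissible. Note also that the condition \(n>2q\) is not used in this algebraic identity. It matters only for the graph interpretation of \(M_n(S)\) as a product of Laplacian eigenvalues.
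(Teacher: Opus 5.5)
Your proposal is correct and follows the paper's proof essentially step for step. You factor \(P_S(x_j)=(2-x_j)\prod_r(x_j-\phi_r)\), interchange the products, and evaluate the two pieces with the two identities of Lemma~\ref{lem:chebyshev-product}; your sign convention matches the lemma, and \(\phi_r\neq2\) is justified by Proposition~\ref{prop:QS-basic}.
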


\begin{proof}
Using
\[
P_S(x)
=
(2-x)
\prod_{r=1}^{q-1}(x-\phi_r),
\]
we have
\[
\begin{aligned}
M_n(S)
&=
\prod_{j=1}^{n-1}
\left(
2-2\cos\frac{2\pi j}{n}
\right)
\\
&\qquad\times
\prod_{r=1}^{q-1}
\prod_{j=1}^{n-1}
\left(
2\cos\frac{2\pi j}{n}-\phi_r
\right).
\end{aligned}
\]
The formula for \(M_n(S)\) follows from Lemma
\ref{lem:chebyshev-product}, and division by \(n^2\) gives the formula for
\(B_n(S)\).
\end{proof}

For each \(r=1,\ldots,q-1\), choose
\(\rho_r\in\mathbb C\setminus\{0\}\) satisfying
\[
\rho_r+\rho_r^{-1}=\phi_r.
\]
Then
\[
2T_n\left(\frac{\phi_r}{2}\right)
=
\rho_r^n+\rho_r^{-n}.
\]

\begin{corollary}
\label{thm:exponential-product}
For every integer \(n>2q\),
\[
M_n(S)
=
n^2
\prod_{r=1}^{q-1}
\left\{
(-1)^n
\frac{
\rho_r^n+\rho_r^{-n}-2
}{
2-\phi_r
}
\right\}.
\]
Equivalently,
\[
B_n(S)
=
\prod_{r=1}^{q-1}
\frac{
(-\rho_r)^n+(-\rho_r^{-1})^n-2(-1)^n
}{
2-\phi_r
}.
\]
\end{corollary}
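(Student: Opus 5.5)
The plan is to obtain the corollary directly from Proposition~\ref{thm:root-product} by substituting the identity
\[
2T_n\left(\frac{\phi_r}{2}\right)=\rho_r^n+\rho_r^{-n}
\]
stated just before the corollary. The only real work is to justify this identity for an arbitrary nonzero complex \(\rho_r\), not only for \(\rho_r\) on the unit circle.

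I would argue by induction using the polynomials \(C_k(x)=2T_k(x/2)\) from the proof of Proposition~\ref{prop:PS-basic}. These satisfy
\[
C_0=2,\qquad C_1=x,\qquad C_{k+1}(x)=xC_k(x)-C_{k-1}(x).
\]
Put \(D_k=\rho^k+\rho^{-k}\), where \(\rho+\rho^{-1}=\phi\). Then \(D_0=2\), \(D_1=\phi\), and expanding the product gives
\[
(\rho+\rho^{-1})(\rho^k+\rho^{-k})=D_{k+1}+D_{k-1}.
\]
So \(D_{k+1}=\phi D_k-D_{k-1}\), which is the same recurrence with the same initial values as \(C_k(\phi)\). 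By induction, \(C_k(\phi)=D_k\) for all \(k\geq0\). This shows the identity holds for every nonzero complex root of \(z^2-\phi z+1=0\). It also shows the expression does not depend on which of the two roots \(\rho_r\) or \(\rho_r^{-1}\) is chosen, since \(D_k\) is symmetric under \(\rho\mapsto\rho^{-1}\).

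Substituting into the formula for \(M_n(S)\) in Proposition~\ref{thm:root-product} gives the first displayed identity of the corollary. The denominators \(2-\phi_r\) are nonzero by Proposition~\ref{prop:QS-basic}, so the substitution is legitimate. For the second form, divide by \(n^2\), which is Definition~\ref{def:Bn}. Then move the sign \((-1)^n\) inside each numerator using
\[
(-1)^n\bigl(\rho_r^n+\rho_r^{-n}-2\bigr)=(-\rho_r)^n+(-\rho_r^{-1})^n-2(-1)^n.
\]
The case \(q=1\) is the empty product and needs no argument.

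There is no genuine obstacle here. The one step that needs care is the identity \(2T_n(\phi/2)=\rho^n+\rho^{-n}\) for complex, possibly non-unimodular, \(\rho\) (for example when \(\phi_r\) is real with \(|\phi_r|>2\), or non-real). The recurrence argument above handles this uniformly, without invoking the trigonometric definition of \(T_n\).
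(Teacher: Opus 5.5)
Your proposal is correct and follows the paper's route: substitute $2T_n(\phi_r/2)=\rho_r^n+\rho_r^{-n}$ into Proposition~\ref{thm:root-product}, divide by $n^2$, and absorb $(-1)^n$ into each factor. Your induction via $C_{k+1}=xC_k-C_{k-1}$ proves that identity for arbitrary nonzero complex $\rho_r$, a step the paper only asserts.
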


\begin{proof}
Substitute
\[
2T_n\left(\frac{\phi_r}{2}\right)
=
\rho_r^n+\rho_r^{-n}
\]
into Proposition \ref{thm:root-product}.
The second formula follows from
\[
(-1)^n
\left(
\rho_r^n+\rho_r^{-n}-2
\right)
=
(-\rho_r)^n
+
(-\rho_r^{-1})^n
-
2(-1)^n.
\]
\end{proof}

\begin{remark}
\label{rem:exceptional-root-minus-two}
The value \(\phi_r=2\) cannot occur by Proposition
\ref{prop:QS-basic}.
If \(\phi_r=-2\), then
\[
\rho_r=\rho_r^{-1}=-1,
\]
and the corresponding factor in Corollary
\ref{thm:exponential-product} is
\[
\frac{1-(-1)^n}{2}.
\]
It vanishes when \(n\) is even.
In this case,
\[
-2
=
2\cos\pi
\]
is a Fourier point and produces an additional zero Laplacian eigenvalue,
consistently with
\[
M_n(S)=B_n(S)=0.
\]
Repeated roots of \(Q_S(x)\) and coincident exponential bases are allowed
in the construction below.
\end{remark}

\subsection{Integer annihilating polynomials}
\label{subsec:integer-annihilator}

Put
\[
m=q-1
\]
and define
\[
\mathcal E_m
=
\{-1,0,1\}^m.
\]
For
\[
\boldsymbol{\varepsilon}
=
(\varepsilon_1,\ldots,\varepsilon_m)
\in\mathcal E_m,
\]
let
\[
\kappa(\boldsymbol{\varepsilon})
=
\left|
\left\{
r:\varepsilon_r=0
\right\}
\right|
\]
and
\[
\alpha_{\boldsymbol{\varepsilon}}
=
(-1)^m
\prod_{r=1}^{m}
\rho_r^{\varepsilon_r}.
\]
The choices
\[
\varepsilon_r=1,
\qquad
\varepsilon_r=-1,
\qquad
\varepsilon_r=0
\]
correspond respectively to
\[
-\rho_r,
\qquad
-\rho_r^{-1},
\qquad
-1.
\]

When \(m=0\), we set
\[
\mathcal E_0=\{\varnothing\},
\qquad
\kappa(\varnothing)=0,
\qquad
\alpha_{\varnothing}=1.
\]

Since \(Q_S(x)\) is monic,
\[
\prod_{r=1}^{m}(2-\phi_r)
=
Q_S(2)
=
\sum_{s\in S}s^2.
\]
Expanding the product in Corollary
\ref{thm:exponential-product} gives
\begin{equation}
\label{eq:Bn-expanded}
B_n(S)
=
\frac{1}{Q_S(2)}
\sum_{\boldsymbol{\varepsilon}\in\mathcal E_m}
(-2)^{\kappa(\boldsymbol{\varepsilon})}
\alpha_{\boldsymbol{\varepsilon}}^{\,n}.
\end{equation}

We regard
\[
\mathfrak A_S
=
\left(
\alpha_{\boldsymbol{\varepsilon}}
\right)_{\boldsymbol{\varepsilon}\in\mathcal E_m}
\]
as a multiset indexed by \(\mathcal E_m\), so equal values are retained
with their multiplicities.
Its underlying set of distinct values is
\[
\mathcal A_S
=
\left\{
\alpha_{\boldsymbol{\varepsilon}}:
\boldsymbol{\varepsilon}\in\mathcal E_m
\right\}.
\]
Hence
\[
|\mathcal A_S|
\leq
3^m
=
3^{q-1}.
\]

Let \(E\) denote the shift operator
\[
Eb_n=b_{n+1}.
\]
A polynomial \(F(X)\) is said to annihilate a tail sequence
\(\{b_n\}_{n>N}\) if
\[
F(E)b_n=0
\]
for every \(n>N\).

\begin{lemma}
\label{lem:minimal-exponential-recurrence}
Let \(\Gamma\) be a finite set of distinct nonzero complex numbers, and
suppose that
\[
b_n
=
\sum_{\gamma\in\Gamma}
c_\gamma\gamma^n,
\qquad
c_\gamma\neq0,
\]
for every \(n>N\).
Then the minimal monic annihilating polynomial of
\(\{b_n\}_{n>N}\) over \(\mathbb C\) is
\[
\prod_{\gamma\in\Gamma}(X-\gamma).
\]
\end{lemma}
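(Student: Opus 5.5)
Let \(P(X)=\prod_{\gamma\in\Gamma}(X-\gamma)\). The plan has two halves. First we show that \(P\) annihilates the tail sequence. Then we show that every annihilating polynomial is divisible by \(P\). Monicity then forces \(P\) to be the minimal one.

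For the first half, write \(P(X)=\sum_k p_kX^k\). For \(n>N\) we have \(b_{n+k}=\sum_\gamma c_\gamma\gamma^{n+k}\), since \(n+k>N\). Hence
\[
P(E)b_n=\sum_{\gamma\in\Gamma}c_\gamma\gamma^nP(\gamma)=0.
\]
More generally, the same computation gives
\[
F(E)b_n=\sum_{\gamma\in\Gamma}c_\gamma F(\gamma)\gamma^n \qquad (n>N)
\]
for every polynomial \(F\in\mathbb C[X]\).

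For the second half, suppose \(F(E)b_n=0\) for all \(n>N\). Then \(\sum_\gamma d_\gamma\gamma^n=0\) for all \(n>N\), where \(d_\gamma=c_\gamma F(\gamma)\). Take the \(|\Gamma|\) consecutive indices \(n=N+1,\ldots,N+|\Gamma|\). The coefficient matrix \((\gamma^n)\) factors as a Vandermonde matrix in the distinct values \(\gamma\), times the diagonal matrix \(\operatorname{diag}(\gamma^{N+1})\). This product is invertible because the \(\gamma\) are distinct and nonzero. Hence all \(d_\gamma=0\). Since every \(c_\gamma\neq0\), we get \(F(\gamma)=0\) for every \(\gamma\in\Gamma\). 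Because the \(\gamma\) are distinct, each simple factor \(X-\gamma\) divides \(F\), and so \(P\mid F\).

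Consequently, any nonzero annihilator has degree at least \(|\Gamma|\). The monic annihilators of minimal degree are exactly \(P\), so the minimal monic annihilating polynomial is \(P\). The set of annihilators is an ideal of \(\mathbb C[X]\) generated by \(P\), so uniqueness is automatic.

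The only delicate step is the linear-independence argument. Its key point is that the exponential functions \(n\mapsto\gamma^n\) restricted to the tail \(n>N\) remain linearly independent. This is exactly where the nonvanishing of the \(\gamma\) (to factor out \(\gamma^{N+1}\)) and their distinctness (for the Vandermonde determinant) are used. The nonvanishing of the \(c_\gamma\) is what upgrades \(d_\gamma=0\) to \(F(\gamma)=0\).
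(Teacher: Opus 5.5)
Your proof is correct and is essentially the paper's argument: the product annihilates each \(\gamma^n\), and a Vandermonde system on \(|\Gamma|\) consecutive indices forces \(c_\gamma F(\gamma)=0\), hence \(F(\gamma)=0\) and \(\prod_{\gamma\in\Gamma}(X-\gamma)\mid F\). Your factorization into a Vandermonde matrix times \(\operatorname{diag}(\gamma^{N+1})\), and your closing ideal remark, only spell out steps the paper states in a single line.
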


\begin{proof}
The displayed polynomial annihilates every sequence \(\gamma^n\), and
therefore annihilates \(b_n\).

Conversely, suppose that \(F(E)b_n=0\).
Then
\[
\sum_{\gamma\in\Gamma}
c_\gamma F(\gamma)\gamma^n
=
0
\]
for every \(n>N\).
Taking \(|\Gamma|\) consecutive values of \(n\) gives a Vandermonde system.
Since the elements of \(\Gamma\) are distinct and nonzero, its determinant
is nonzero.
Thus,
\[
c_\gamma F(\gamma)=0
\]
for every \(\gamma\in\Gamma\), and hence
\[
F(\gamma)=0
\]
for every \(\gamma\in\Gamma\).
Therefore, \(F(X)\) is divisible by
\[
\prod_{\gamma\in\Gamma}(X-\gamma).
\]
\end{proof}

\begin{theorem}
\label{thm:integer-annihilating-polynomial}
Let \(S\) be a nonempty finite set of positive integers, let
\(q=\max S\), and put \(m=q-1\).
Define
\[
\mathcal H_S(X)
=
\prod_{\boldsymbol{\varepsilon}\in\mathcal E_m}
\left(
X-\alpha_{\boldsymbol{\varepsilon}}
\right).
\]
Then
\[
\mathcal H_S(X)\in\mathbb Z[X],
\qquad
\deg\mathcal H_S
=
3^{q-1},
\]
and \(\mathcal H_S(X)\) annihilates the tail sequence
\[
\{B_n(S)\}_{n>2q}.
\]
Consequently, \(B_n(S)\) satisfies an integer-coefficient linear recurrence
of order at most
\[
3^{q-1}.
\]
\end{theorem}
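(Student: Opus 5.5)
The plan is to split the statement into three parts: the annihilation property, the degree count, and integrality of the coefficients. The degree is immediate, since $\mathcal H_S$ is a product of $|\mathcal E_m|=3^m=3^{q-1}$ monic linear factors; for $m=0$ it is $X-1$. For annihilation I would start from the expansion \eqref{eq:Bn-expanded}, which writes $B_n(S)$ for $n>2q$ as a fixed linear combination of the sequences $\alpha_{\boldsymbol\varepsilon}^{\,n}$. Each factor $E-\alpha_{\boldsymbol\varepsilon}$ kills $\alpha_{\boldsymbol\varepsilon}^{\,n}$, and the operators $E-\alpha$ commute. Hence $\mathcal H_S(E)$ kills every summand, and so it kills $B_n(S)$ on the tail $n>2q$. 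Repeated values of $\alpha_{\boldsymbol\varepsilon}$ cause no problem here, because we only need each base to occur among the roots. The recurrence of order at most $3^{q-1}$ then follows by reading off the coefficients of $\mathcal H_S$.

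The main obstacle is showing $\mathcal H_S(X)\in\mathbb Z[X]$, since the $\rho_r$ are neither integers nor uniquely determined. First I would check that $\mathcal H_S$ does not depend on the choice of the $\rho_r$. Replacing $\rho_r$ by $\rho_r^{-1}$ just permutes $\mathcal E_m$ by negating the coordinate $\varepsilon_r$, so the multiset $\mathfrak A_S$ is unchanged.

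The core of the argument is an inductive construction over the roots. Set $R_0(X)=X-1$. Given a monic polynomial $R(X)=\prod_i(X-\beta_i)$ and a root $\phi$ with $\rho+\rho^{-1}=\phi$, define
\[
R^{(\phi)}(X)=\prod_i (X-\beta_i\rho)(X-\beta_i\rho^{-1})(X-\beta_i).
\]
Then $\mathcal H_S(X)=\pm\, R_m(-X)$ after accounting for the sign $(-1)^m$, or more simply one builds the $\alpha$'s directly with the factors $-\rho_r$, $-\rho_r^{-1}$, $-1$. For each $i$,
\[
(X-\beta\rho)(X-\beta\rho^{-1})=X^2-\phi\beta X+\beta^2,
\]
so
\[
\mathcal H_S(X)=\prod_{\boldsymbol\varepsilon'\in\mathcal E_{m-1}}\bigl(X^2+\phi_m\alpha' X+\alpha'^2\bigr)\bigl(X+\alpha'\bigr),
\]
where $\alpha'$ runs over the bases for the first $m-1$ roots, with the signs adjusted. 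This expression is symmetric in $\rho_m\leftrightarrow\rho_m^{-1}$ and polynomial in $\phi_m$.

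Iterating this over all $r$, the coefficients of $\mathcal H_S$ become polynomials with integer coefficients in $\phi_1,\dots,\phi_m$. They are also symmetric in the $\phi_r$, because permuting the indices $r$ permutes $\mathcal E_m$. By the fundamental theorem on symmetric polynomials, each coefficient is then an integer polynomial in the elementary symmetric functions of the $\phi_r$. Since $Q_S$ is monic in $\mathbb Z[x]$ by Proposition \ref{prop:QS-basic}, these elementary symmetric functions are integers up to sign, and therefore $\mathcal H_S\in\mathbb Z[X]$.

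To make the step "polynomial in the $\phi_r$" rigorous, I would treat $\phi_1,\dots,\phi_m$ as indeterminates and $\rho_r$ as roots of $z^2-\phi_r z+1$. The coefficients of the generic $\mathcal H$ then lie in $\mathbb Z[\rho_r^{\pm1}]$ and are invariant under every involution $\rho_r\mapsto\rho_r^{-1}$. Any Laurent polynomial invariant under $\rho\mapsto\rho^{-1}$ is an integer polynomial in $\rho+\rho^{-1}$, because $\rho^k+\rho^{-k}=C_k(\rho+\rho^{-1})$ with $C_k\in\mathbb Z[x]$ as in the proof of Proposition \ref{prop:PS-basic}. Applying this one variable at a time gives membership in $\mathbb Z[\phi_1,\dots,\phi_m]$. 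Specializing the $\phi_r$ to the actual roots of $Q_S$ then finishes the argument.
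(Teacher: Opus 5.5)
Your proposal is correct. The degree count and the annihilation step are the same as the paper's; the difference lies in how you prove $\mathcal H_S(X)\in\mathbb Z[X]$.

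The paper argues arithmetically. The $\phi_r$, hence the $\rho_r$ and all $\alpha_{\boldsymbol\varepsilon}$, are algebraic integers. Each $\sigma\in\operatorname{Gal}(\overline{\mathbb Q}/\mathbb Q)$ permutes the roots of $Q_S$ and sends $\rho_r$ to $\rho_{\pi_\sigma(r)}^{\pm1}$, so it permutes the indexed multiset $\mathfrak A_S$. The coefficients are therefore rational algebraic integers, hence integers.

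You instead work generically in $\mathbb Z[\rho_1^{\pm1},\dots,\rho_m^{\pm1}]$. You use invariance under the inversions $\rho_r\mapsto\rho_r^{-1}$ and the index permutations to land in the symmetric part of $\mathbb Z[\phi_1,\dots,\phi_m]$, and then specialize. It is this last argument, not the recursive factorization paragraph, that actually gives polynomiality in the $\phi_r$. The one-variable-at-a-time step is cleanest if you note that the $(\mathbb Z/2)^m$-invariants are spanned by the orbit sums $\prod_{k_r>0}(\rho_r^{k_r}+\rho_r^{-k_r})$, each of which is an integer polynomial in the $\phi_r$.

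Your route avoids Galois theory and algebraic integers entirely, and it proves more. The coefficients of $\mathcal H_S$ are universal integer polynomials in the coefficients of $Q_S$, so the construction works for any monic $Q$ over any commutative ring.

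The paper's route is shorter given standard facts, and it is built to be reused. In Theorem \ref{thm:minimal-annihilating-polynomial}, the set $\mathcal A_S^{*}$ depends on coincidences and cancellations that exist only after specialization. The same Galois permutation immediately gives $C_{\sigma(\gamma)}=C_\gamma$ there. Your generic argument does not see that situation and would need an extra step, for instance rationality of the minimal recurrence of the rational sequence $B_n(S)$ together with Gauss's lemma applied to $\mathcal M_S\mid\mathcal H_S$.
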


\begin{proof}
Since \(Q_S(x)\) is monic and belongs to \(\mathbb Z[x]\), every root
\(\phi_r\) is an algebraic integer.
Moreover, \(\rho_r\) satisfies
\[
z^2-\phi_rz+1=0.
\]
Thus, every \(\rho_r\), and hence every
\(\alpha_{\boldsymbol{\varepsilon}}\), is an algebraic integer.

Let
\[
\sigma
\in
\operatorname{Gal}
\left(
\overline{\mathbb Q}/\mathbb Q
\right).
\]
Since \(Q_S(x)\in\mathbb Z[x]\), there is a permutation
\(\pi_\sigma\) of \(\{1,\ldots,m\}\) such that
\[
\sigma(\phi_r)
=
\phi_{\pi_\sigma(r)}.
\]
For each \(r\), there is a sign
\[
\delta_r\in\{-1,1\}
\]
such that
\[
\sigma(\rho_r)
=
\rho_{\pi_\sigma(r)}^{\delta_r}.
\]
Consequently, \(\sigma\) induces a bijection of \(\mathcal E_m\) given by
\[
\varepsilon'_{\pi_\sigma(r)}
=
\delta_r\varepsilon_r,
\]
and
\[
\sigma
\left(
\alpha_{\boldsymbol{\varepsilon}}
\right)
=
\alpha_{\boldsymbol{\varepsilon}'}.
\]
Thus, the indexed multiset \(\mathfrak A_S\) is Galois invariant.

It follows that every coefficient of \(\mathcal H_S(X)\) is rational.
These coefficients are also algebraic integers, being elementary symmetric
polynomials in algebraic integers.
Therefore,
\[
\mathcal H_S(X)\in\mathbb Z[X].
\]

Since \(|\mathcal E_m|=3^m\),
\[
\deg\mathcal H_S
=
3^m
=
3^{q-1}.
\]
Finally, equation \eqref{eq:Bn-expanded} expresses \(B_n(S)\) as a linear
combination of the exponential sequences
\(\alpha_{\boldsymbol{\varepsilon}}^n\).
Hence
\[
\mathcal H_S(E)B_n(S)=0
\]
for every \(n>2q\).
\end{proof}

\begin{remark}
\label{rem:H-independent-rho}
The polynomial \(\mathcal H_S(X)\) is independent of the ordering of the
roots \(\phi_r\) and of the choices of \(\rho_r\).
Indeed, permuting the roots or replacing any \(\rho_r\) by
\(\rho_r^{-1}\) only permutes the indexed multiset \(\mathfrak A_S\).

Repeated roots of \(Q_S(x)\), the exceptional value \(\phi_r=-2\), or
multiplicative relations among the numbers \(\rho_r\) may cause distinct
indices to produce the same exponential base.
Thus, \(\mathcal H_S(X)\) need not be square-free or minimal.
\end{remark}

\subsection{The minimal annihilating polynomial}
\label{subsec:minimal-annihilator}

For each \(\gamma\in\mathcal A_S\), define
\[
C_\gamma
=
\sum_{\substack{
\boldsymbol{\varepsilon}\in\mathcal E_m\\
\alpha_{\boldsymbol{\varepsilon}}=\gamma
}}
(-2)^{\kappa(\boldsymbol{\varepsilon})}.
\]
Collecting equal exponential bases in \eqref{eq:Bn-expanded}, we obtain
\begin{equation}
\label{eq:Bn-collected}
B_n(S)
=
\frac{1}{Q_S(2)}
\sum_{\gamma\in\mathcal A_S}
C_\gamma\gamma^n.
\end{equation}

\begin{theorem}
\label{thm:minimal-annihilating-polynomial}
Define
\[
\mathcal A_S^{*}
=
\left\{
\gamma\in\mathcal A_S:
C_\gamma\neq0
\right\}.
\]
Then the minimal monic annihilating polynomial of the tail sequence
\(\{B_n(S)\}_{n>2q}\) over \(\mathbb C\) is
\[
\mathcal M_S(X)
=
\prod_{\gamma\in\mathcal A_S^{*}}
(X-\gamma),
\]
and
\[
\mathcal M_S(X)\in\mathbb Z[X].
\]
Consequently,
\[
\deg\mathcal M_S
=
|\mathcal A_S^{*}|
\leq
|\mathcal A_S|
\leq
3^{q-1}.
\]

If the \(3^{q-1}\) indexed values
\(\alpha_{\boldsymbol{\varepsilon}}\) are pairwise distinct, then
\[
\mathcal M_S(X)
=
\mathcal H_S(X),
\]
and the minimal recurrence order is exactly
\[
3^{q-1}.
\]
If \(\mathcal A_S^{*}=\varnothing\), the empty product is understood to be
\(1\).
\end{theorem}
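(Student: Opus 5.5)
The plan has three parts: first identify the minimal annihilating polynomial over \(\mathbb C\), then show its coefficients are integers, and finally treat the case of pairwise distinct bases.

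\emph{Minimality over \(\mathbb C\).} Start from the collected representation \eqref{eq:Bn-collected}. Discard the bases \(\gamma\) with \(C_\gamma=0\); this leaves
\[
B_n(S)=\frac{1}{Q_S(2)}\sum_{\gamma\in\mathcal A_S^{*}}C_\gamma\gamma^n
\qquad (n>2q).
\]
Each \(\alpha_{\boldsymbol\varepsilon}\) is a product of \(\pm1\) and the numbers \(\rho_r^{\pm1}\), and every \(\rho_r\) is nonzero, so all elements of \(\mathcal A_S^{*}\) are distinct nonzero complex numbers. The coefficients \(C_\gamma/Q_S(2)\) are nonzero, since \(Q_S(2)=\sum_{s\in S} s^2\neq0\) by Proposition~\ref{prop:QS-basic}. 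Lemma~\ref{lem:minimal-exponential-recurrence} then shows that \(\mathcal M_S(X)\) is the minimal monic annihilating polynomial. If \(\mathcal A_S^{*}=\varnothing\), the sequence is identically zero and the minimal polynomial is \(1\). This part also gives the degree statement \(\deg\mathcal M_S=|\mathcal A_S^{*}|\le|\mathcal A_S|\le 3^{q-1}\).

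\emph{Integrality of \(\mathcal M_S\).} This is the only real step. Reuse the Galois argument from the proof of Theorem~\ref{thm:integer-annihilating-polynomial}. Each \(\sigma\in\operatorname{Gal}(\overline{\mathbb Q}/\mathbb Q)\) induces a bijection \(\boldsymbol\varepsilon\mapsto\boldsymbol\varepsilon'\) of \(\mathcal E_m\) with \(\sigma(\alpha_{\boldsymbol\varepsilon})=\alpha_{\boldsymbol\varepsilon'}\). The key observation is that this bijection preserves \(\kappa\): the rule \(\varepsilon'_{\pi_\sigma(r)}=\delta_r\varepsilon_r\) sends zero entries to zero entries, and \(\pi_\sigma\) is a permutation, so \(\kappa(\boldsymbol\varepsilon')=\kappa(\boldsymbol\varepsilon)\). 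Since \(\sigma\) is injective, \(\alpha_{\boldsymbol\varepsilon}=\gamma\) holds exactly when \(\alpha_{\boldsymbol\varepsilon'}=\sigma(\gamma)\). Therefore the bijection maps the fiber over \(\gamma\) onto the fiber over \(\sigma(\gamma)\), and
\[
C_{\sigma(\gamma)}=C_\gamma .
\]
In particular, \(\sigma\) maps \(\mathcal A_S\) to itself and maps \(\mathcal A_S^{*}\) to itself, bijectively. Hence the coefficients of \(\mathcal M_S\) are fixed by every \(\sigma\), so they are rational. They are also elementary symmetric functions of algebraic integers, hence algebraic integers. It follows that \(\mathcal M_S\in\mathbb Z[X]\).

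One could instead note that \(\mathcal M_S\) is monic and divides \(\mathcal H_S\in\mathbb Z[X]\) and then try Gauss's lemma, but that route needs the rationality of \(\mathcal M_S\) anyway. The direct Galois invariance above is the cleaner argument.

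\emph{Pairwise distinct case.} If the \(3^{q-1}\) values \(\alpha_{\boldsymbol\varepsilon}\) are pairwise distinct, then every fiber is a single index. So \(C_\gamma=(-2)^{\kappa(\boldsymbol\varepsilon)}\neq0\) for each \(\gamma\), which gives \(\mathcal A_S^{*}=\mathcal A_S\) with \(|\mathcal A_S|=3^{q-1}\). Hence \(\mathcal M_S=\mathcal H_S\), and the minimal recurrence order is exactly \(3^{q-1}\).
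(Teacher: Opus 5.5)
Your proposal is correct and follows the paper's proof essentially step for step. You apply Lemma~\ref{lem:minimal-exponential-recurrence} to the collected representation, obtain $C_{\sigma(\gamma)}=C_\gamma$ from the $\kappa$-preserving Galois bijection of $\mathcal E_m$ to get rational algebraic-integer coefficients, and use singleton fibers in the pairwise distinct case; you also make explicit a few points the paper leaves implicit, such as why zero entries are preserved, why fibers map onto fibers, and the empty case.
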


\begin{proof}
Equation \eqref{eq:Bn-collected} is a linear combination of exponential
sequences with distinct nonzero bases and nonzero coefficients indexed by
\(\mathcal A_S^{*}\).
Lemma \ref{lem:minimal-exponential-recurrence} therefore gives
\[
\mathcal M_S(X)
=
\prod_{\gamma\in\mathcal A_S^{*}}
(X-\gamma)
\]
as the minimal monic annihilating polynomial over \(\mathbb C\).

It remains to prove that its coefficients are integers.
As in the proof of Theorem
\ref{thm:integer-annihilating-polynomial}, every Galois automorphism
induces a permutation of \(\mathcal E_m\) that preserves
\(\kappa(\boldsymbol{\varepsilon})\) and maps
\(\alpha_{\boldsymbol{\varepsilon}}\) to its Galois conjugate.
Hence
\[
C_{\sigma(\gamma)}
=
C_\gamma.
\]
Thus, \(\mathcal A_S^{*}\) is Galois invariant, and
\(\mathcal M_S(X)\) has rational coefficients.
Since its roots are algebraic integers, these coefficients are also
algebraic integers.
Therefore,
\[
\mathcal M_S(X)\in\mathbb Z[X].
\]

If the indexed values
\(\alpha_{\boldsymbol{\varepsilon}}\) are pairwise distinct, then
\[
C_{\alpha_{\boldsymbol{\varepsilon}}}
=
(-2)^{\kappa(\boldsymbol{\varepsilon})}
\neq0.
\]
Hence every indexed base is active, so
\[
\mathcal M_S(X)=\mathcal H_S(X),
\]
and its degree is \(3^{q-1}\).
\end{proof}

\begin{remark}
\label{rem:order-reduction}
The recurrence order may be reduced in two ways.
First, distinct indices
\(\boldsymbol{\varepsilon}\) may determine the same exponential base.
This may result from repeated roots of \(Q_S(x)\), the exceptional value
\(\phi_r=-2\), or multiplicative relations among the numbers \(\rho_r\).
Second, after equal bases are collected, the coefficient \(C_\gamma\) may
vanish.
Thus, taking only the square-free part of
\(\mathcal H_S(X)\) need not give the minimal annihilating polynomial.
\end{remark}

\begin{remark}
\label{rem:spectral-function-role}
When \(G_n(S)\) is connected, Corollary
\ref{cor:Bn-interpretation} identifies \(B_n(S)\) with the normalized
spanning-tree number and the corresponding normalized special value of the
reduced spectral function.
Hence the recurrence for \(B_n(S)\) also applies to these quantities.
\end{remark}

\section{Concrete examples}
\label{sec:examples}

We apply the general results to four step sets.
The cases \(S=\{1\}\) and \(S=\{1,2\}\) are included as brief classical
examples.
For \(S=\{1,2,3\}\) and \(S=\{1,3\}\), we derive the corresponding
ninth-degree annihilating polynomials and prove their minimality.
Relevant Chebyshev formulas for these circulant graphs may be found in
\cite{ZhangYongGolin2005,MednykhMednykh2019}.

\subsection{The case \(S=\{1\}\)}
\label{subsec:S1}

\begin{proposition}
\label{prop:S1}
Let \(S=\{1\}\).
Then, for every \(n\geq3\),
\[
B_n(\{1\})=1.
\]
Consequently,
\[
M_n(\{1\})=n^2,
\qquad
\tau(C_n)=n,
\]
and
\[
\widetilde{\zeta}_{C_n}
\left(\frac{1}{2}\right)^{-1}
=
\frac{n^2}{2^{n-1}}.
\]
The minimal annihilating polynomial of
\(\{B_n(\{1\})\}_{n\geq3}\) is \(X-1\).
\end{proposition}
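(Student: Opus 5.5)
The plan is to specialize the general machinery to \(q=1\), so \(m=0\). First, compute \(P_{\{1\}}(x)=2-2T_1(x/2)=2-x\). Hence \(Q_{\{1\}}(x)=1\), and \(Q_{\{1\}}(2)=1=\sum_{s\in S}s^2\), consistently with Proposition \ref{prop:QS-basic}. By Definitions \ref{def:Mn} and \ref{def:Bn}, the second identity of Lemma \ref{lem:chebyshev-product} gives
\[
M_n(\{1\})=\prod_{j=1}^{n-1}\left(2-2\cos\frac{2\pi j}{n}\right)=n^2,
\]
so \(B_n(\{1\})=1\). Equivalently, Proposition \ref{thm:root-product} with the empty product over \(r\) yields \(B_n=1\) directly. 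The only small point to check is the range: the general results are stated for \(n>2q=2\), i.e.\ \(n\geq3\), which is exactly the claimed range, and Lemma \ref{lem:chebyshev-product} holds for every \(n\geq1\) anyway.

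Next, \(G_n(\{1\})=\operatorname{Cay}(\mathbb Z_n,\{\pm1\})\) is the cycle \(C_n\). It is connected for \(n\geq3\) (since \(\gcd(n,1)=1\)) and \(2\)-regular. Corollary \ref{cor:Bn-interpretation} then gives \(\tau(C_n)=nB_n=n\), and
\[
\widetilde{\zeta}_{C_n}(1/2)^{-1}=\frac{n^2B_n}{2^{n-1}}=\frac{n^2}{2^{n-1}}.
\]
Here I will use \(d=2|S|=2\) and rearrange the displayed formula \(B_n=\frac{(2|S|)^{n-1}}{n^2}\widetilde{\zeta}^{-1}\).

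Finally, for minimality, \(\mathcal E_0=\{\varnothing\}\) with \(\alpha_\varnothing=1\) and \(\kappa=0\). So \(\mathcal A_S=\{1\}\), \(C_1=1\neq0\), and Theorem \ref{thm:minimal-annihilating-polynomial} gives \(\mathcal M_S(X)=X-1\). Alternatively, I would argue directly: the constant sequence \(1\) is annihilated by \(E-1\), and no monic polynomial of degree \(0\) (namely \(1\)) annihilates a nonzero sequence. There is no real obstacle here; the only care needed is the conventions for \(m=0\) and the index range \(n\geq3\).
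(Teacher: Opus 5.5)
Your proposal is correct and follows essentially the same route as the paper: compute \(P_{\{1\}}(x)=2-x\), so \(Q_{\{1\}}=1\) and the empty product in Proposition~\ref{thm:root-product} gives \(B_n(\{1\})=1\). The paper likewise reads the graph identities off Corollary~\ref{cor:Bn-interpretation} and observes that a nonzero constant sequence has minimal annihilating polynomial \(X-1\); your direct use of the second identity of Lemma~\ref{lem:chebyshev-product} is just the \(q=1\) case of that proposition.
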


\begin{proof}
In this case,
\[
P_S(x)
=
2-2T_1\left(\frac{x}{2}\right)
=
2-x.
\]
Hence \(q=1\), \(Q_S(x)=1\), and the product over the roots of \(Q_S(x)\)
is empty.
Proposition \ref{thm:root-product} therefore gives
\[
B_n(\{1\})=1.
\]
The remaining identities follow from Definition \ref{def:Bn} and
Corollary \ref{cor:Bn-interpretation}.
Since the sequence is the nonzero constant sequence \(1\), its minimal
annihilating polynomial is \(X-1\).
\end{proof}

\subsection{The case \(S=\{1,2\}\)}
\label{subsec:S12}

\begin{proposition}
\label{prop:S12}
Let \(S=\{1,2\}\).
Then, for every \(n\geq5\),
\[
B_n(\{1,2\})=F_n^2.
\]
Consequently,
\[
M_n(\{1,2\})=n^2F_n^2,
\qquad
\tau(C_n^2)=nF_n^2,
\]
and
\[
\widetilde{\zeta}_{C_n^2}
\left(\frac{1}{4}\right)^{-1}
=
\frac{n^2F_n^2}{4^{n-1}}.
\]

The minimal annihilating polynomial of
\(\{B_n(\{1,2\})\}_{n\geq5}\) is
\[
\begin{aligned}
\mathcal M_{\{1,2\}}(X)
&=
(X+1)(X^2-3X+1)
\\
&=
X^3-2X^2-2X+1.
\end{aligned}
\]
Thus, for every \(n\geq5\),
\[
B_{n+3}(\{1,2\})
=
2B_{n+2}(\{1,2\})
+
2B_{n+1}(\{1,2\})
-
B_n(\{1,2\}).
\]
\end{proposition}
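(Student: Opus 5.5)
For $S=\{1,2\}$ we have $q=2$ and $m=1$, so the plan is to compute $Q_S$ and $\rho_1$ explicitly, read off $B_n$ from the expansion \eqref{eq:Bn-expanded}, and then apply Theorem~\ref{thm:minimal-annihilating-polynomial}. First I will compute $P_S$. Since $2T_1(x/2)=x$ and $2T_2(x/2)=x^2-2$, we get
\[
P_S(x)=4-x-(x^2-2)=-x^2-x+6=(2-x)(x+3),
\]
so $Q_S(x)=x+3$ and $\phi_1=-3$. As a consistency check, $Q_S(2)=5=1^2+2^2$, matching Proposition~\ref{prop:QS-basic}.

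Next I choose $\rho_1$ with $\rho_1+\rho_1^{-1}=-3$, namely $\rho_1=-\varphi^2$, where $\varphi=(1+\sqrt5)/2$ and $\bar\varphi=(1-\sqrt5)/2$. Then $-\rho_1=\varphi^2$, $-\rho_1^{-1}=\bar\varphi^2$ (because $\varphi\bar\varphi=-1$), and the middle choice gives $-1$. Hence the three indexed bases are $\alpha_{(1)}=\varphi^2$, $\alpha_{(-1)}=\bar\varphi^2$ and $\alpha_{(0)}=-1$. Substituting into \eqref{eq:Bn-expanded} gives
\[
B_n=\tfrac15\bigl(\varphi^{2n}+\bar\varphi^{2n}-2(-1)^n\bigr)=\tfrac15(\varphi^n-\bar\varphi^n)^2,
\]
using $(\varphi\bar\varphi)^n=(-1)^n$. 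By Binet's formula this equals $F_n^2$, valid for $n>2q=4$.

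The spanning-tree and spectral-function consequences then follow directly. Definition~\ref{def:Bn} gives $M_n=n^2F_n^2$. Corollary~\ref{cor:Bn-interpretation} gives the rest, since $C_n^2=G_n(\{1,2\})$ is connected because $1\in S$; here $2|S|=4$.

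Finally, for minimality, the three values $\varphi^2\approx2.618$, $\bar\varphi^2\approx0.382$ and $-1$ are pairwise distinct. The distinct-values case of Theorem~\ref{thm:minimal-annihilating-polynomial} then gives $\mathcal M_S=\mathcal H_S=(X+1)(X-\varphi^2)(X-\bar\varphi^2)$. Since $\varphi^2+\bar\varphi^2=3$ and $\varphi^2\bar\varphi^2=1$, this is $(X+1)(X^2-3X+1)=X^3-2X^2-2X+1$. Reading the coefficients of $\mathcal M_S(E)B_n=0$ as a recurrence yields the stated relation. There is no real obstacle in this case; the only step needing care is the sign bookkeeping in the choice of $\rho_1$, so that the $\kappa=0$ terms produce $\varphi^{2n}+\bar\varphi^{2n}$ and the cross term is exactly $-2(-1)^n$.
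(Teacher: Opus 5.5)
Your proposal is correct and follows the paper's proof essentially step for step. Your choice $\rho_1=-\varphi^2$ is exactly the paper's $\rho=(-3-\sqrt5)/2$, and you apply the distinct-bases case of Theorem~\ref{thm:minimal-annihilating-polynomial} to $\varphi^2,\bar\varphi^2,-1$ just as the paper does with $-\rho,-\rho^{-1},-1$. The only cosmetic difference is that you get $F_n^2$ from Binet's formula via $\tfrac15(\varphi^n-\bar\varphi^n)^2$, whereas the paper uses $\rho^n+\rho^{-n}=(-1)^nL_{2n}$ and $(L_{2n}-2(-1)^n)/5=F_n^2$.
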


\begin{proof}
We have
\[
\begin{aligned}
P_S(x)
&=
4
-
2T_1\left(\frac{x}{2}\right)
-
2T_2\left(\frac{x}{2}\right)
\\
&=
6-x-x^2
\\
&=
(2-x)(x+3).
\end{aligned}
\]
Therefore,
\[
Q_S(x)=x+3,
\qquad
\phi=-3.
\]
Choose
\[
\rho=\frac{-3-\sqrt5}{2},
\]
so that
\[
\rho+\rho^{-1}=-3.
\]
Corollary \ref{thm:exponential-product} gives
\[
B_n(\{1,2\})
=
\frac{
(-\rho)^n+(-\rho^{-1})^n-2(-1)^n
}{5}.
\]
Since
\[
\rho^n+\rho^{-n}
=
(-1)^nL_{2n},
\]
where \(L_n\) denotes the \(n\)-th Lucas number, we obtain
\[
B_n(\{1,2\})
=
\frac{L_{2n}-2(-1)^n}{5}
=
F_n^2.
\]
This agrees with the classical formula of Kleitman and Golden
\cite{KleitmanGolden1975}.

The three exponential bases are
\[
-\rho,
\qquad
-\rho^{-1},
\qquad
-1,
\]
with respective coefficients
\[
\frac{1}{5},
\qquad
\frac{1}{5},
\qquad
-\frac{2}{5}.
\]
They are pairwise distinct, and all three coefficients are nonzero.
Theorem \ref{thm:minimal-annihilating-polynomial} therefore gives
\[
\begin{aligned}
\mathcal M_{\{1,2\}}(X)
&=
(X+\rho)(X+\rho^{-1})(X+1)
\\
&=
(X+1)(X^2-3X+1).
\end{aligned}
\]
The recurrence follows from the expanded polynomial.
\end{proof}

\subsection{The case \(S=\{1,2,3\}\)}
\label{subsec:S123}

\begin{proposition}
\label{prop:S123}
Let \(S=\{1,2,3\}\), and put
\[
b_n=B_n(\{1,2,3\})
\qquad
(n\geq7).
\]
Then the minimal annihilating polynomial of
\(\{b_n\}_{n\geq7}\) is
\[
\begin{aligned}
\mathcal M_{\{1,2,3\}}(X)
&=
(X-1)
(X^4-4X^3-X^2-4X+1)
\\
&\qquad\times
(X^4+3X^3+6X^2+3X+1)
\\
&=
X^9-2X^8-6X^7-21X^6
\\
&\qquad
+21X^3+6X^2+2X-1.
\end{aligned}
\]
Its minimal recurrence order is \(9\), and, for every \(n\geq7\),
\[
\begin{aligned}
b_{n+9}
&=
2b_{n+8}
+
6b_{n+7}
+
21b_{n+6}
\\
&\qquad
-
21b_{n+3}
-
6b_{n+2}
-
2b_{n+1}
+
b_n.
\end{aligned}
\]
Moreover,
\[
\tau(C_n^3)=nb_n
\]
and
\[
\widetilde{\zeta}_{C_n^3}
\left(\frac{1}{6}\right)^{-1}
=
\frac{n^2b_n}{6^{n-1}}.
\]
\end{proposition}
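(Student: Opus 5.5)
We would apply Theorem~\ref{thm:minimal-annihilating-polynomial} with \(q=3\), \(m=2\). The plan is to compute \(Q_S\), group the nine bases \(\alpha_{\boldsymbol\varepsilon}\) into three Galois-stable blocks, identify the integer polynomial each block generates, and show that all nine values are pairwise distinct.

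\textbf{Step 1: compute \(Q_S\).} Using \(2T_2(x/2)=x^2-2\) and \(2T_3(x/2)=x^3-3x\), we get
\[
P_S(x)=8+2x-x^2-x^3=(2-x)(x^2+3x+4),
\]
so \(Q_S(x)=x^2+3x+4\). As a check, \(Q_S(2)=14=1^2+2^2+3^2\), as Proposition~\ref{prop:QS-basic} requires. Its roots are \(\phi_{1,2}=(-3\pm i\sqrt7)/2\), so \(\phi_1+\phi_2=-3\) and \(\phi_1\phi_2=4\).

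\textbf{Step 2: group the bases.} Since \(m=2\), we have \(\alpha_{\boldsymbol\varepsilon}=\rho_1^{\varepsilon_1}\rho_2^{\varepsilon_2}\).
\begin{itemize}
\item \(\boldsymbol\varepsilon=(0,0)\) gives the base \(1\), contributing the factor \(X-1\).
\item The four indices with exactly one zero give \(\rho_r^{\pm1}\). These are the roots of
\[
(X^2-\phi_1X+1)(X^2-\phi_2X+1)=X^4+3X^3+6X^2+3X+1.
\]
\item The four indices with no zero give \(\rho_1^{\pm1}\rho_2^{\pm1}\). Put \(t=\rho_1\rho_2+(\rho_1\rho_2)^{-1}\) and \(t'=\rho_1\rho_2^{-1}+\rho_1^{-1}\rho_2\). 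Then
\[
t+t'=\phi_1\phi_2=4,\qquad tt'=\phi_1^2+\phi_2^2-4=-3 .
\]
So \(t,t'\) are the roots of \(Y^2-4Y-3\). Substituting \(Y=X+X^{-1}\) and multiplying by \(X^2\) gives \(X^4-4X^3-X^2-4X+1\).
\end{itemize}
The product of the three factors is \(\mathcal H_S(X)\). Expanding it yields the stated degree-\(9\) polynomial, and reading off its coefficients gives the recurrence.

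\textbf{Step 3: distinctness (the main obstacle).} We must verify that the nine values \(\alpha_{\boldsymbol\varepsilon}\) are pairwise distinct; then Theorem~\ref{thm:minimal-annihilating-polynomial} gives \(\mathcal M_S=\mathcal H_S\) and minimal order \(9\).
\begin{itemize}
\item \emph{Each quartic is square-free.} Within the first quartic this needs \(\phi_1\neq\phi_2\) and \(\phi_r\neq\pm2\), so that \(\rho_r\neq\rho_r^{-1}\); both hold because the \(\phi_r\) are non-real. Within the second it needs \(t\neq t'\) and \(t,t'\neq\pm2\). These hold since \(t,t'=2\pm\sqrt7\).
\item \emph{The base \(1\) is not a root of either quartic.} The quartics take the values \(14\) and \(-7\) at \(X=1\).
\item \emph{The quartics are coprime.} Their difference is \(7X(X^2+X+1)\). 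A common root would therefore be a primitive cube root of unity \(\omega\), since \(0\) is not a root. Substituting \(\omega\) into \(X^4+3X^3+6X^2+3X+1\) gives \(2\omega^2\neq0\).
\end{itemize}
The care needed is to organize this so that no coincidence across the three blocks is missed.

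\textbf{Step 4: spanning-tree and reduced-spectral-function identities.} Since \(1\in S\), the graph \(G_n(\{1,2,3\})=C_n^3\) is connected for every \(n\geq7\). Corollary~\ref{cor:Bn-interpretation}, with \(2|S|=6\), then gives \(\tau(C_n^3)=nb_n\) and \(\widetilde\zeta_{C_n^3}(1/6)^{-1}=n^2b_n/6^{n-1}\).
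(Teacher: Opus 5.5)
Your proposal is correct and follows the paper's proof essentially step for step. It uses the same factorization \(Q_S(x)=x^2+3x+4\) and the same three blocks giving \(X-1\), \(X^4+3X^3+6X^2+3X+1\) and \(X^4-4X^3-X^2-4X+1\). Distinctness is shown the same way, via square-freeness, the values \(14\) and \(-7\) at \(X=1\), and the difference \(7X(X^2+X+1)\); your value \(2\omega^2\) is the paper's \(-2(z+1)\). The graph identities then come from Corollary~\ref{cor:Bn-interpretation}, as in the paper.
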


\begin{proof}
In this case,
\[
\begin{aligned}
P_S(x)
&=
6
-
2T_1\left(\frac{x}{2}\right)
-
2T_2\left(\frac{x}{2}\right)
-
2T_3\left(\frac{x}{2}\right)
\\
&=
8+2x-x^2-x^3
\\
&=
(2-x)(x^2+3x+4).
\end{aligned}
\]
Thus,
\[
Q_S(x)=x^2+3x+4,
\]
whose roots are
\[
\phi_{\pm}
=
\frac{-3\pm i\sqrt7}{2}.
\]
Choose \(\rho_{\pm}\) satisfying
\[
\rho_{\pm}+\rho_{\pm}^{-1}
=
\phi_{\pm}.
\]

The nine indexed exponential bases are
\[
\begin{gathered}
1,\quad
\rho_+,\quad
\rho_+^{-1},\quad
\rho_-,\quad
\rho_-^{-1},
\\
\rho_+\rho_-,
\quad
\rho_+\rho_-^{-1},
\quad
\rho_+^{-1}\rho_-,
\quad
\rho_+^{-1}\rho_-^{-1}.
\end{gathered}
\]

The four bases
\[
\rho_+,\quad
\rho_+^{-1},\quad
\rho_-,\quad
\rho_-^{-1}
\]
are the roots of
\[
\begin{aligned}
A(X)
&=
(X^2-\phi_+X+1)
(X^2-\phi_-X+1)
\\
&=
X^4
-
(\phi_++\phi_-)X^3
+
(\phi_+\phi_-+2)X^2
\\
&\qquad
-
(\phi_++\phi_-)X
+
1.
\end{aligned}
\]
Since
\[
\phi_++\phi_-=-3,
\qquad
\phi_+\phi_-=4,
\]
we obtain
\[
A(X)
=
X^4+3X^3+6X^2+3X+1.
\]

Next, put
\[
u=\rho_+\rho_-,
\qquad
v=\rho_+\rho_-^{-1},
\]
and
\[
a=u+u^{-1},
\qquad
c=v+v^{-1}.
\]
The remaining four bases are
\[
u,\quad u^{-1},\quad v,\quad v^{-1}.
\]
We have
\[
\begin{aligned}
a+c
&=
(\rho_++\rho_+^{-1})
(\rho_-+\rho_-^{-1})
\\
&=
\phi_+\phi_-
\\
&=
4
\end{aligned}
\]
and
\[
\begin{aligned}
ac
&=
\rho_+^2+\rho_+^{-2}
+
\rho_-^2+\rho_-^{-2}
\\
&=
(\phi_+^2-2)+(\phi_-^2-2)
\\
&=
(\phi_++\phi_-)^2
-
2\phi_+\phi_-
-
4
\\
&=
-3.
\end{aligned}
\]
Hence these four bases are the roots of
\[
\begin{aligned}
D(X)
&=
(X^2-aX+1)(X^2-cX+1)
\\
&=
X^4-(a+c)X^3+(ac+2)X^2-(a+c)X+1
\\
&=
X^4-4X^3-X^2-4X+1.
\end{aligned}
\]
Therefore,
\[
\mathcal H_{\{1,2,3\}}(X)
=
(X-1)D(X)A(X).
\]

It remains to prove minimality.
Since
\[
\phi_+\neq\phi_-,
\qquad
\phi_{\pm}\neq\pm2,
\]
the polynomial \(A(X)\) is square-free.
Similarly, \(a\) and \(c\) are the distinct roots of
\[
t^2-4t-3=0,
\]
and neither is equal to \(\pm2\), so \(D(X)\) is square-free.
Moreover,
\[
A(1)=14,
\qquad
D(1)=-7,
\]
and hence neither quartic factor has \(1\) as a root.

Finally,
\[
A(X)-D(X)
=
7X(X^2+X+1).
\]
Suppose that \(z\) is a common root of \(A(X)\) and \(D(X)\).
Since both polynomials have constant term \(1\), we have \(z\neq0\), and
therefore
\[
z^2+z+1=0.
\]
Using
\[
z^2=-z-1,
\qquad
z^3=1,
\]
we obtain
\[
A(z)=-2(z+1)\neq0,
\]
a contradiction.
Thus,
\[
\gcd(A,D)=1.
\]
It follows that the nine indexed bases are pairwise distinct.

Since
\[
Q_S(2)=14,
\]
the coefficient of every indexed base in \eqref{eq:Bn-expanded} is of the
form
\[
\frac{(-2)^k}{14},
\qquad
k=0,1,2,
\]
and is therefore nonzero.
Theorem \ref{thm:minimal-annihilating-polynomial} gives
\[
\mathcal M_{\{1,2,3\}}(X)
=
\mathcal H_{\{1,2,3\}}(X).
\]
Thus, the displayed polynomial is minimal and has degree \(9\).
The graph-theoretic identities follow from Corollary
\ref{cor:Bn-interpretation}.
\end{proof}

\subsection{The case \(S=\{1,3\}\)}
\label{subsec:S13}

\begin{proposition}
\label{prop:S13}
Let \(S=\{1,3\}\), and put
\[
b_n=B_n(\{1,3\})
\qquad
(n\geq7).
\]
Then
\[
b_n
=
\frac{2}{5}
\left\{
T_n\left(\frac{-1+i}{2}\right)-1
\right\}
\left\{
T_n\left(\frac{-1-i}{2}\right)-1
\right\}.
\]

The minimal annihilating polynomial of
\(\{b_n\}_{n\geq7}\) is
\[
\begin{aligned}
\mathcal M_{\{1,3\}}(X)
&=
(X-1)
(X^4-2X^3-2X^2-2X+1)
\\
&\qquad\times
(X^4+2X^3+4X^2+2X+1)
\\
&=
X^9-X^8-2X^7-10X^6-2X^5
\\
&\qquad
+2X^4+10X^3+2X^2+X-1.
\end{aligned}
\]
Its minimal recurrence order is \(9\), and, for every \(n\geq7\),
\[
\begin{aligned}
b_{n+9}
&=
b_{n+8}
+
2b_{n+7}
+
10b_{n+6}
+
2b_{n+5}
\\
&\qquad
-
2b_{n+4}
-
10b_{n+3}
-
2b_{n+2}
-
b_{n+1}
+
b_n.
\end{aligned}
\]
Moreover,
\[
\tau(G_n(\{1,3\}))=nb_n
\]
and
\[
\widetilde{\zeta}_{G_n(\{1,3\})}
\left(\frac{1}{4}\right)^{-1}
=
\frac{n^2b_n}{4^{n-1}}.
\]
\end{proposition}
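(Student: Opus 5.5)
The plan is to follow the pattern of Proposition~\ref{prop:S123}, specialized to \(S=\{1,3\}\). First I compute the Chebyshev-type polynomial:
\[
P_S(x)=4-x-2T_3\left(\tfrac{x}{2}\right)=4-x-(x^3-3x)=4+2x-x^3=(2-x)(x^2+2x+2).
\]
Hence \(Q_S(x)=x^2+2x+2\), with roots \(\phi_\pm=-1\pm i\), and \(Q_S(2)=10=1^2+3^2\), as Proposition~\ref{prop:QS-basic} predicts.

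Proposition~\ref{thm:root-product} then gives
\[
b_n=\frac{(-1)^{2n}\,\{2T_n(\phi_+/2)-2\}\{2T_n(\phi_-/2)-2\}}{(2-\phi_+)(2-\phi_-)}.
\]
Since \((-1)^{2n}=1\) and \((2-\phi_+)(2-\phi_-)=Q_S(2)=10\), this is the displayed formula with prefactor \(4/10=2/5\).

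For the annihilator I group the nine bases as in the previous subsection, noting that \((-1)^m=1\) since \(m=2\). The base \(1\) comes from \(\boldsymbol\varepsilon=(0,0)\). The four bases \(\rho_\pm^{\pm1}\) are the roots of
\[
A(X)=(X^2-\phi_+X+1)(X^2-\phi_-X+1).
\]
With \(\phi_++\phi_-=-2\) and \(\phi_+\phi_-=2\), this gives
\[
A(X)=X^4+2X^3+4X^2+2X+1.
\]
The remaining four bases are \(u^{\pm1},v^{\pm1}\), where \(u=\rho_+\rho_-\) and \(v=\rho_+\rho_-^{-1}\). Put \(a=u+u^{-1}\) and \(c=v+v^{-1}\). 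Then \(a+c=\phi_+\phi_-=2\), and
\[
ac=(\phi_++\phi_-)^2-2\phi_+\phi_--4=-4.
\]
So these bases are the roots of
\[
D(X)=X^4-2X^3-2X^2-2X+1.
\]
Thus \(\mathcal H_{\{1,3\}}=(X-1)A(X)D(X)\). Expanding this product gives the degree-nine polynomial, and reading off its coefficients gives the recurrence.

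Minimality reduces, by Theorem~\ref{thm:minimal-annihilating-polynomial}, to showing the nine indexed bases are pairwise distinct. Once that holds, each coefficient is \((-2)^k/10\neq0\).
\begin{itemize}
\item \(A\) is square-free, because \(\phi_+\neq\phi_-\) and \(\phi_\pm\neq\pm2\).
\item \(D\) is square-free, because \(a,c=1\pm\sqrt5\) are the distinct roots of \(t^2-2t-4\), and neither equals \(\pm2\).
\item \(1\) is a root of neither quartic, since \(A(1)=10\) and \(D(1)=-4\).
\end{itemize}

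The main step is \(\gcd(A,D)=1\). Instead of reducing modulo \(A-D=2X(2X^2+3X+2)\), I will argue by location of roots.
\begin{itemize}
\item Since \(a=1+\sqrt5>2\), the roots \(u^{\pm1}\) are real.
\item Since \(c=1-\sqrt5\in(-2,2)\), the roots \(v^{\pm1}\) lie on the unit circle.
\item A root \(\rho\) of \(A\) satisfies \(\rho+\rho^{-1}=-1\pm i\), which is nonreal. So \(\rho\) is not real. If \(|\rho|=1\), then \(\rho+\rho^{-1}=\rho+\bar\rho\) would be real, so \(\rho\) is not on the unit circle either.
\end{itemize}
Hence \(A\) and \(D\) share no root. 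As a fallback, I would check that \(A(z)\neq0\) for roots \(z\) of \(2z^2+3z+2\) directly. Finally, the graph-theoretic identities follow from Corollary~\ref{cor:Bn-interpretation}.
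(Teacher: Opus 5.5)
Your proposal is correct and matches the paper's proof step for step except at the coprimality step. The shared steps are the factorization of $P_S$, the prefactor $4/10=2/5$, the grouping $\mathcal H_{\{1,3\}}=(X-1)A(X)D(X)$, the square-freeness of $A$ and $D$, and $A(1)=10$, $D(1)=-4$.

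The paper settles $\gcd(A,D)=1$ algebraically. Since $A-D=2X(2X^2+3X+2)$, a common root $z$ satisfies $2z^2+3z+2=0$. Reducing modulo this relation gives $8A(z)=-5(3z+2)$, which would force $z=-2/3$, and that is not a root.

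You instead separate the roots by location. The roots of $D$ lie on $\mathbb R$ or on the unit circle, because the two parameters $1\pm\sqrt5$ are real, one outside $(-2,2)$ and one inside. No root of $A$ is real or unimodular, since $\rho+\rho^{-1}=-1\pm i$ is nonreal. Which of $a,c$ equals $1+\sqrt5$ depends on the choice of $\rho_\pm$, but this is harmless because $D$ is symmetric in $a$ and $c$.

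The two routes buy different things. The paper's argument is a finite computation in $\mathbb Q[z]/(2z^2+3z+2)$; it uses no ordering or absolute values and is easy to check mechanically. Yours needs no computation and explains \emph{why} the two quartics are coprime. It also gives $\gcd(A,D)=1$ verbatim for $S=\{1,2,3\}$, where $\{a,c\}=\{2\pm\sqrt7\}$. More generally it works for any $S$ with $\max S=3$ whose $Q_S$ has nonreal roots: one may take $\rho_-=\overline{\rho_+}$, which makes $u=|\rho_+|^2$ real and $v=\rho_+/\overline{\rho_+}$ unimodular.

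One small addition for your last sentence: Corollary~\ref{cor:Bn-interpretation} requires $G_n(\{1,3\})$ to be connected, which holds because $1\in S$.
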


\begin{proof}
We have
\[
\begin{aligned}
P_S(x)
&=
4
-
2T_1\left(\frac{x}{2}\right)
-
2T_3\left(\frac{x}{2}\right)
\\
&=
4-x-(x^3-3x)
\\
&=
4+2x-x^3
\\
&=
(2-x)(x^2+2x+2).
\end{aligned}
\]
Therefore,
\[
Q_S(x)=x^2+2x+2,
\]
whose roots are
\[
\phi_{\pm}=-1\pm i.
\]
Since
\[
Q_S(2)=10,
\]
Proposition \ref{thm:root-product} gives
\[
\begin{aligned}
b_n
&=
\frac{
\left\{
2T_n\left(\frac{\phi_+}{2}\right)-2
\right\}
\left\{
2T_n\left(\frac{\phi_-}{2}\right)-2
\right\}
}{10}
\\
&=
\frac{2}{5}
\left\{
T_n\left(\frac{-1+i}{2}\right)-1
\right\}
\left\{
T_n\left(\frac{-1-i}{2}\right)-1
\right\}.
\end{aligned}
\]

Choose \(\rho_{\pm}\) satisfying
\[
\rho_{\pm}+\rho_{\pm}^{-1}
=
\phi_{\pm}.
\]
The nine indexed exponential bases are
\[
\begin{gathered}
1,\quad
\rho_+,\quad
\rho_+^{-1},\quad
\rho_-,\quad
\rho_-^{-1},
\\
\rho_+\rho_-,
\quad
\rho_+\rho_-^{-1},
\quad
\rho_+^{-1}\rho_-,
\quad
\rho_+^{-1}\rho_-^{-1}.
\end{gathered}
\]

The four bases
\[
\rho_+,\quad
\rho_+^{-1},\quad
\rho_-,\quad
\rho_-^{-1}
\]
are the roots of
\[
\begin{aligned}
A(X)
&=
(X^2-\phi_+X+1)
(X^2-\phi_-X+1)
\\
&=
X^4
-
(\phi_++\phi_-)X^3
+
(\phi_+\phi_-+2)X^2
\\
&\qquad
-
(\phi_++\phi_-)X
+
1
\\
&=
X^4+2X^3+4X^2+2X+1,
\end{aligned}
\]
where
\[
\phi_++\phi_-=-2,
\qquad
\phi_+\phi_-=2.
\]

Put
\[
u=\rho_+\rho_-,
\qquad
v=\rho_+\rho_-^{-1},
\]
and
\[
a=u+u^{-1},
\qquad
c=v+v^{-1}.
\]
Then
\[
a+c
=
\phi_+\phi_-
=
2
\]
and
\[
\begin{aligned}
ac
&=
(\phi_+^2-2)+(\phi_-^2-2)
\\
&=
(\phi_++\phi_-)^2
-
2\phi_+\phi_-
-
4
\\
&=
-4.
\end{aligned}
\]
Therefore, the remaining four bases are the roots of
\[
\begin{aligned}
D(X)
&=
(X^2-aX+1)(X^2-cX+1)
\\
&=
X^4-(a+c)X^3+(ac+2)X^2-(a+c)X+1
\\
&=
X^4-2X^3-2X^2-2X+1.
\end{aligned}
\]
Hence
\[
\mathcal H_{\{1,3\}}(X)
=
(X-1)D(X)A(X).
\]

To prove minimality, first note that \(A(X)\) is square-free because
\[
\phi_+\neq\phi_-,
\qquad
\phi_{\pm}\neq\pm2.
\]
Similarly, \(a\) and \(c\) are the distinct roots of
\[
t^2-2t-4=0
\]
and neither is equal to \(\pm2\), so \(D(X)\) is square-free.
Moreover,
\[
A(1)=10,
\qquad
D(1)=-4.
\]

Finally,
\[
A(X)-D(X)
=
2X(2X^2+3X+2).
\]
Suppose that \(z\) is a common root of \(A(X)\) and \(D(X)\).
Then \(z\neq0\) and
\[
2z^2+3z+2=0.
\]
Reducing \(8A(z)\) modulo this relation gives
\[
8A(z)=-5(3z+2).
\]
Thus \(A(z)=0\) would imply
\[
z=-\frac{2}{3},
\]
but
\[
2\left(-\frac{2}{3}\right)^2
+
3\left(-\frac{2}{3}\right)
+
2
=
\frac{8}{9}
\neq0.
\]
This contradiction proves that
\[
\gcd(A,D)=1.
\]
Hence the nine indexed bases are pairwise distinct.

Since
\[
Q_S(2)=10,
\]
the coefficient of every indexed base in \eqref{eq:Bn-expanded} is of the
form
\[
\frac{(-2)^k}{10},
\qquad
k=0,1,2,
\]
and is nonzero.
Theorem \ref{thm:minimal-annihilating-polynomial} therefore gives
\[
\mathcal M_{\{1,3\}}(X)
=
\mathcal H_{\{1,3\}}(X).
\]
Thus, the displayed polynomial is minimal and has degree \(9\).
The graph-theoretic identities follow from Corollary
\ref{cor:Bn-interpretation}.
\end{proof}

\section{Exact algebraic verification}
\label{sec:verification}

We verify the formulas for
\[
S=\{1,3\}
\qquad\text{and}\qquad
S=\{1,2,3\}
\]
by two exact calculations.
No numerical approximation of the algebraic roots is used.

Define
\[
\mathcal C_n(x)
=
2T_n\left(\frac{x}{2}\right).
\]
For every \(n\geq1\), the polynomial \(\mathcal C_n(x)\) is monic and
belongs to \(\mathbb Z[x]\).
Moreover,
\[
\mathcal C_0(x)=2,
\qquad
\mathcal C_1(x)=x,
\qquad
\mathcal C_{n+1}(x)
=
x\mathcal C_n(x)-\mathcal C_{n-1}(x).
\]

Since \(Q_S(x)\) is monic with roots
\(\phi_1,\ldots,\phi_{q-1}\), counted with multiplicities,
\[
\operatorname{Res}_x
\left(
Q_S(x),\mathcal C_n(x)-2
\right)
=
\prod_{r=1}^{q-1}
\left\{
\mathcal C_n(\phi_r)-2
\right\}.
\]
Proposition \ref{thm:root-product} therefore gives
\[
B_n^{\mathrm{root}}(S)
=
\frac{(-1)^{n(q-1)}}{Q_S(2)}
\operatorname{Res}_x
\left(
Q_S(x),\mathcal C_n(x)-2
\right).
\]
The resultant is evaluated as the determinant of the Sylvester matrix of
\[
Q_S(x)
\qquad\text{and}\qquad
\mathcal C_n(x)-2,
\]
using integer polynomial arithmetic.
For the second calculation, let \(L_n(S)\) denote the Laplacian matrix of
\(G_n(S)\), and let \(L_n(S)^{(0)}\) be the
\((n-1)\times(n-1)\) principal submatrix obtained by deleting one row
and the corresponding column.
Since both step sets contain \(1\), the corresponding graphs are connected.
Kirchhoff's Matrix--Tree Theorem gives
\[
B_n^{\mathrm{det}}(S)
=
\frac{1}{n}
\det L_n(S)^{(0)}.
\]
The determinants were evaluated over the integers using the Bareiss
fraction-free elimination algorithm.

\begin{table}[htbp]
\centering
\caption{Exact verification for \(S=\{1,3\}\).}
\label{tab:S13-verification}
\begin{tabular}{c|r|r}
\hline
\(n\)
&
\(B_n^{\mathrm{root}}(\{1,3\})\)
&
\(B_n^{\mathrm{det}}(\{1,3\})\)
\\
\hline
\(7\)  & \(169\)       & \(169\)       \\
\(8\)  & \(512\)       & \(512\)       \\
\(9\)  & \(1369\)      & \(1369\)      \\
\(10\) & \(4050\)      & \(4050\)      \\
\(11\) & \(11881\)     & \(11881\)     \\
\(12\) & \(33800\)     & \(33800\)     \\
\(13\) & \(97969\)     & \(97969\)     \\
\(14\) & \(284258\)    & \(284258\)    \\
\(15\) & \(819025\)    & \(819025\)    \\
\(16\) & \(2367488\)   & \(2367488\)   \\
\hline
\end{tabular}
\end{table}

\begin{table}[htbp]
\centering
\caption{Exact verification for \(S=\{1,2,3\}\).}
\label{tab:S123-verification}
\begin{tabular}{c|r|r}
\hline
\(n\)
&
\(B_n^{\mathrm{root}}(\{1,2,3\})\)
&
\(B_n^{\mathrm{det}}(\{1,2,3\})\)
\\
\hline
\(7\)  & \(2401\)        & \(2401\)        \\
\(8\)  & \(10368\)       & \(10368\)       \\
\(9\)  & \(45796\)       & \(45796\)       \\
\(10\) & \(203522\)      & \(203522\)      \\
\(11\) & \(896809\)      & \(896809\)      \\
\(12\) & \(3964928\)     & \(3964928\)     \\
\(13\) & \(17530969\)    & \(17530969\)    \\
\(14\) & \(77451458\)    & \(77451458\)    \\
\(15\) & \(342324004\)   & \(342324004\)   \\
\(16\) & \(1512940032\)  & \(1512940032\)  \\
\hline
\end{tabular}
\end{table}

Tables \ref{tab:S13-verification} and
\ref{tab:S123-verification} show agreement between the resultant and
Laplacian cofactor calculations for ten consecutive values.
Since the recurrences in Propositions \ref{prop:S13} and
\ref{prop:S123} have order \(9\), the values corresponding to
\[
n=7,8,\ldots,15
\]
form a complete set of initial values.
In both cases, the recurrence with \(n=7\) gives the displayed value at
\(n=16\), in agreement with both exact calculations.

\section{Conclusion and future problems}
\label{sec:conclusion}

In this paper, we studied the normalized algebraic product sequence
\[
B_n(S)
=
\frac{1}{n^2}
\prod_{j=1}^{n-1}
P_S\left(
2\cos\frac{2\pi j}{n}
\right),
\qquad
n>2\max S,
\]
associated with a fixed nonempty finite set \(S\) of positive integers.

Let \(q=\max S\).
Starting from the Chebyshev root representation of \(B_n(S)\), we
constructed an explicit monic annihilating polynomial
\[
\mathcal H_S(X)\in\mathbb Z[X]
\]
of degree
\[
3^{q-1}.
\]
Hence the tail sequence
\[
\{B_n(S)\}_{n>2q}
\]
satisfies an integer-coefficient linear recurrence of order at most
\(3^{q-1}\).

We also determined the minimal annihilating polynomial.
After equal exponential bases are collected, let
\(\mathcal A_S^{*}\) denote the set of distinct bases whose total
coefficients are nonzero.
Then
\[
\mathcal M_S(X)
=
\prod_{\gamma\in\mathcal A_S^{*}}
(X-\gamma)
\in\mathbb Z[X],
\]
and the minimal recurrence order is
\[
|\mathcal A_S^{*}|.
\]
This description takes account of both coincidences among exponential
bases and cancellations among their coefficients.
In particular, if all \(3^{q-1}\) indexed bases are pairwise distinct,
then
\[
\mathcal M_S(X)=\mathcal H_S(X),
\]
and the upper bound \(3^{q-1}\) is attained.

For \(S=\{1,2,3\}\) and \(S=\{1,3\}\), we derived the corresponding
ninth-degree minimal annihilating polynomials explicitly.
In both cases, the nine indexed bases are pairwise distinct and have
nonzero coefficients.
The formulas and recurrences were also checked by exact resultant and
Laplacian cofactor calculations.

When \(G_n(S)\) is connected,
\[
B_n(S)
=
\frac{\tau(G_n(S))}{n}
=
\frac{(2|S|)^{n-1}}{n^2}
\widetilde{\zeta}_{G_n(S)}
\left(
\frac{1}{2|S|}
\right)^{-1}.
\]
Thus, the recurrence obtained for the algebraic sequence \(B_n(S)\)
also applies to the normalized spanning-tree number and the corresponding
special value of the reduced spectral function.

The present method relies on a fixed step set \(S\), the restriction
\(n>2\max S\), and the one-variable Chebyshev factorization arising from
the cyclic group.
Moreover, the bound \(3^{q-1}\) need not be minimal when exponential bases
coincide or their coefficients cancel.

A natural problem is to characterize the step sets \(S\) for which the
upper bound \(3^{q-1}\) is attained.
More generally, it would be useful to describe reductions in the minimal
recurrence order directly in terms of algebraic and multiplicative
relations among the roots of \(Q_S(x)\).

Another direction is to study the dominant exponential bases and their
relation to the asymptotic behavior of \(B_n(S)\).
It would also be interesting to extend the integer-annihilating-polynomial
construction to more general Abelian Cayley graphs.
Such an extension would require suitable multivariable analogues of the
Chebyshev factorization used here.

\section*{Acknowledgments}

We want to thank the referee very much for their valuable comments.


\end{document}